\newcommand{\R}{\mathbb{R}}
\newtheorem{theorem}{Theorem}
\newtheorem{lemma}{Lemma}
\newtheorem{corollary}[lemma]{Corollary}
\newtheorem{proposition}[lemma]{Proposition}
\newtheorem{definition}[lemma]{Definition}
\newtheorem{remark}[lemma]{Remark}
\newenvironment{proof}{{\it Proof.}~~}{\hfill$\square$}
\newcommand{\D}{\mathcal{D}}
\begin{document}


\title{Consensus and Flocking under Communication Failure}


\author{Fabio Ancona\footnote{Dipartimento di Matematica ``Tullio Levi-Civita'', Universit\`a\ degli Studi di Padova, Via Trieste 63, 35121 Padova, Italy, {\tt ancona@math.unipd.it}}, Mohamed Bentaibi\footnote{Dipartimento di Matematica ``Tullio Levi-Civita'', Universit\`a\ degli Studi di Padova, Via Trieste 63, 35121 Padova, Italy, {\tt mohamed.bentaibi@studenti.unipd.it}}, Francesco Rossi\footnote{Dipartimento di Culture del Progetto, Universit\`a\ Iuav di Venezia, 30135 Venezia, Italy, {\tt francesco.rossi@iuav.it}}}


\maketitle


\begin{abstract}                          
For networked systems, Persistent Excitation and Integral Scrambling Condition are conditions ensuring that communication failures between agents can occur, but a minimal level of service is ensured. We consider cooperative multi-agent systems satisfying either of such conditions. For first-order systems, we prove that consensus is attained. For second-order systems, flocking is attained under a standard condition of nonintegrability of the interaction function. In both cases and under both conditions, the original goal is reached under no additional hypotheses on the system with respect to the case of no communication failures.
 \end{abstract}


In recent years, the study of multi-agent systems  has drawn a huge interest in the control community. The reasons behind the impressive rise of this research topic lie in the simplicity of definition, robustness of results, extension of possible applications. For general books on this topic, see e.g. \cite{bullo2009distributed,nedich2015convergence,mesbahi2010graph}. 

Among multi-agent systems, the setting of cooperative systems plays a central role. Very generally, first-order cooperative systems are of the following form:
\begin{eqnarray}\label{e-ODE-noM}
    \dot{x}_i(t) = \frac{\lambda_i}{N}\sum_{j=1}^{N} \phi_{ij}(t) (x_j(t) - x_i(t))
\end{eqnarray}
for $i \in \{1,\ldots,N\}$ where
\begin{eqnarray}
    \phi_{ij}(t) = \phi(|x_i(t) - x_j(t)|)\geq 0. \label{e-phi}
\end{eqnarray}
This dynamics describes the evolution of $N \geq 2$ agents on an Euclidean space $\R^d$, where the position $x_i(t) \in \mathbb{R}^d$ may represent opinion on different topics, velocity or other attributes of agent $i$ at time $t$. The (nonlinear) influence function $\phi_{ij}(t):\R\to\R$ is used to quantify the influence of agent $j$ on agent $i$, where $i,j\in\{1,\ldots,N\}$. The term $\lambda_i$ is a positive scaling parameter. The positivity of $\phi$ corresponds to cooperativity, i.e. to the fact that influence of agents tends to bring them closer. Models of this kind are now ubiquitous in the description of networked systems and their applications. One of the most influential is certainly the bounded confidence model for opinion formation, first described by Hegselmann and Krause in \cite{HK}. The natural goal for such kind of systems is to reach a consensus, i.e. that there exists a common $x^*$ such that $\lim_{t\to +\infty} x_i(t)=x^*$ for all agents $i \in \{1,\ldots,N\}$.

This idea of cooperation has also been extended to second-order systems, in which each agent is described by a pair $(x_i,v_i)$ of position-velocity variables. The dynamics is then given by 
\begin{eqnarray}\label{e-ODE-2nd-noM}
\begin{cases}
    \dot{x}_i(t) = v_i\\
\dot v_i=    \frac{\lambda_i}{N}\sum_{j=1}^{N} \phi_{ij}(t) (v_j(t) - v_i(t))
\end{cases} 
\end{eqnarray}
for $ i,j \in\{1,\ldots,N\}$. As in \eqref{e-phi}, the interaction function $\phi_{ij}$ is non-negative and depends on the positions $x_i,x_j$, but here it plays the role of multiplicative term for the velocity variable. The most famous example of this kind of dynamics was presented by Cucker and Smale in \cite{CS}. A first natural goal for this dynamics is alignment: the velocity variables converge to a common value $v^*$ (i.e. $\lim_{t\to +\infty} v_i(t)=x^*$ for all agents $i \in \{1,\ldots,N\})$. A second goal, that is the focus of this paper, is the so-called flocking: one has both alignment and boundedness of the distances for all times (i.e. there exists $K>0$ such that $|x_i(t)-x_j(t)|< K$ for all $t\geq 0$ and $i,j \in \{1,\ldots,N\}$).\\

From the modelling point of view, each agent is expected to communicate with its neighbours through a \textit{network topology}, influenced by sensor characteristics and the environment. While the easiest scenario involves a fixed network topology (e.g. \cite{watts1998collective,olfati2007consensus}), practical situations often involve dynamic changes, due to factors like communication dropouts, security concerns, or intermittent actuation. In this setting, potential connection losses between agents occur, hindering reaching consensus. Therefore, when interactions between agents are subject to failure, it becomes crucial to investigate whether consensus can still be achieved or not.  For first-order systems, we model this scenario as follows:
\begin{eqnarray}\label{e-ODE}
    \dot{x}_i(t) = \frac{\lambda_i}{N}\sum_{j=1}^{N} M_{ij}(t)\phi_{ij}(t)(x_j(t) - x_i(t))
\end{eqnarray}
for $i \in \{1,\ldots,N\}$. The terms $M_{ij}:[0,+\infty)\mapsto [0,1]$ represent the weight given to the (directed) connection of agent $j$ with agent $i$. They encode the time-varying network topology and account for potential communication failures (e.g., when they vanish). The corresponding second-order system is clearly given by:
\begin{eqnarray}\label{e-ODE-2nd}
\begin{cases}
    \dot{x}_i(t) = v_i\\
\dot v_i=    \frac{\lambda_i}{N}\sum_{j=1}^{N} M_{ij}(t)\phi_{ij}(t) (v_j(t) - v_i(t))
\end{cases} 
\end{eqnarray}

We quantify the possible lack of interactions by introducing two possible conditions: the first one is \textit{persistent excitation} (PE from now on).
\begin{definition}[Persistent excitation]\label{defof:PE} Let $T,\mu>0$ be given. We say that the function $M \in L^{\infty}([0,+\infty);[0,1])$ satisfies the PE condition with parameters $\mu,T$ if it holds
     \begin{equation}\label{e-PE}\tag{PE}
         \int_t^{t+T}M(s)\,ds \geq \mu \qquad \forall t \geq 0.
     \end{equation}
\end{definition}
~

Imposing the PE condition for all $M_{ij}$ means that such a function is not too weak on any given time interval of length $T$. This can be seen as a condition on the minimum level of service.

Although the PE condition is a standard tool in classical control theory (see \cite{narendra2012stable,ChSi2010,ChSi2014}), its use in multi-agent systems has gained interest in the last years (see e.g. \cite{ren2008distributed,tang2020bearing,manfredi2016criterion,bonnet2021consensus}). In \cite{anderson2016convergence}, for instance, the authors prove that consensus holds by requiring the PE condition for all $M_{ij}$ and imposing symmetric communication weights ($M_{ij}=M_{ji}$), with additional technical conditions. Many of the results require the PE condition either on functions that depend on the system's state (see, e.g., \cite{manfredi2016criterion}), or on quantities such as the averaged graph-Laplacian derived from communication weights with respect to the variance bilinear form (see, e.g. \cite{bonnet2021consensus}).

The second condition we present is weaker than the PE condition for all $M_{ij}$: it is the Integral Scrambling Coefficient condition (ISC from now on). It focuses on the family of signals $M_{ij}$ as a whole rather than on each of them.
\begin{definition}[Integral Scrambling Coefficient]\label{defof:ISC} Let $T,\mu>0$ be given. We say that the family of functions $M_{ij} \in L^{\infty}([0,+\infty);[0,1])$ satisfies the ISC condition with parameters $\mu,T$ if for all $i,j\in\{1,\ldots,N\}$ there exists $k\in\{1,\ldots,N\}$ such that 
     \begin{equation}\label{e-ISC}\tag{ISC}
         \min\left\{\int_t^{t+T}M_{ik}(s)\,ds,\int_t^{t+T}M_{jk}(s)\,ds\right\} \geq \mu \quad \forall t \geq 0.
     \end{equation}
\end{definition}
The interpretation of this constraint is the following: for each pair of agents $i,j$, there exists at least a third agent $k$ that is sufficiently often connected to $i$ and to $j$, not necessarily at the same time. One can easily see that ISC is a weaker condition than requiring PE for all $M_{ij}$. Yet, we keep both conditions all along the article, since they are both of interest in themselves.

This condition has also been studied in detail in the literature of networked systems, e.g. in \cite{bonnet2023consensus}. There, the strongest available result is presented: under ISC, the first-order system \eqref{e-ODE} exponentially converges to consensus. Yet, they cannot directly compute the rate of convergence, based on parameters $T,\mu$ or on the initial datum. We later see that this lack of explicit rate of convergence does not allow to extend results to second-order systems. A stronger condition where the scrambling coefficient of the family of functions $M_{ij}$, different from \eqref{e-ISC}, satisfies \eqref{e-PE}, has been considered in \cite{bonnet2022consensus}. Here, the authors do provide an explicit rate of convergence based on the aforementioned data. Such estimate is moreover independent of $N$, which makes it useful to be used in the case where $N\rightarrow +\infty$ in both the classical mean-field and graphon settings.


The first main result of this article focuses on first-order systems \eqref{e-ODE}. We show that either PE for all $M_{ij}$ or ISC is sufficient to ensure consensus. Moreover, the rate of (exponential) consensus can be explicitly computed.
\begin{theorem}\label{t-consensus}
Let $\{x_i(t)\}_{i=1}^N$ be a solution of \eqref{e-ODE} with initial data $\{\bar x_i\}_{i=1}^N$.

Assume the following conditions:
\begin{description}
\item[(H1) \label{hyp:Lip}] The function $\phi(\cdot):[0,+\infty)\to \mathbb{R}$ is Lipschitz continuous.

\item[(H2) \label{hyp:M}] All weights $M_{ij}:[0,+\infty)\to [0,1]$ are $\mathscr{L}^1$-measurable.

\item[(H3) \label{hyp:phimin}] It holds 
$\phi(r)>0\mbox{~~for all~}r\in[0,+\infty)$.

\end{description}

Fix  $T,\mu>0$ and assume that one of the following cases holds: either all $M_{ij}$ satisfy \eqref{e-PE}, or the family $\{M_{ij}\}_{i,j=1,\ldots,N}$ satisfies \eqref{e-ISC}. Then, there exists $\gamma>0$, which value is explicitly given in Proposition \ref{p-1st} below, such that the following estimate holds:
\begin{eqnarray}\label{e-stimaexp}
&&\max_{i,j}|x_i(nT)-x_j(nT)|\leq \\
&& (1-\gamma)^n \max_{i,j}|x_i(0)-x_j(0)|.\nonumber
\end{eqnarray}
As a consequence, consensus holds too: there exists $x^*\in \R^d$ such that
\begin{eqnarray*}
    \lim_{t \rightarrow + \infty} x_i(t)=x^* \quad \forall i \in \{1,\ldots,N\}.
\end{eqnarray*}

\end{theorem}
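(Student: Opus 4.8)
The plan is to show that over each time window $[nT,(n+1)T]$ the diameter $d(t):=\max_{i,j}|x_i(t)-x_j(t)|$ contracts by a fixed factor $1-\gamma$, and then iterate. The contraction factor should depend only on $N$, $T$, $\mu$, the Lipschitz constant of $\phi$, the scaling parameters $\lambda_i$, and a uniform positive lower bound for $\phi$ on the relevant range of distances; crucially this range is controlled a priori because the diameter is non-increasing, so $\phi$ is bounded below by $\phi_* := \min_{0\le r\le d(0)}\phi(r)>0$ by \textbf{(H3)} and continuity. This reduces Theorem~\ref{t-consensus} to the one-step estimate recorded in Proposition~\ref{p-1st}, from which the explicit value of $\gamma$ is read off.

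The key steps, in order, are as follows. First I would establish that $d(t)$ is non-increasing: writing the dynamics in integral form and using that each $x_i$ moves toward a convex combination of the others with non-negative weights $\frac{\lambda_i}{N}M_{ij}\phi_{ij}\ge 0$, the convex hull of $\{x_i(t)\}$ shrinks, so $\sup_{i,j}|x_i(t)-x_j(t)|$ cannot increase; in particular all $x_i(t)$ stay in the initial convex hull and $\phi_{ij}(t)\ge\phi_*$ for all $t$. Second, fix a window $[nT,(n+1)T]$; by relabeling take $n=0$ and translate so the barycentre-like reference is convenient. Pick indices $p,q$ achieving the diameter at the endpoint $(n+1)T$ (or work with an arbitrary projected scalar component, reducing to $d=1$ by linearity). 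Third — the heart of the argument — I would show that over the window some definite ``mixing'' occurs. Under \eqref{e-PE} for all $M_{ij}$: every pair of agents is directly averaged a uniformly positive amount over $[nT,(n+1)T]$, since $\int M_{ij}\phi_{ij}\ge \mu\phi_*$; a Gronwall-type comparison then forces $|x_i((n+1)T)-x_j((n+1)T)|\le (1-\gamma)\, d(nT)$ with $\gamma$ expressed through $\mu,\phi_*,\lambda_i,N,T$ (and the Lipschitz constant controlling how much $\phi_{ij}$ can vary across the window, hence how far the ``effective weight'' can drop below $\mu\phi_*$). Under \eqref{e-ISC}: for the extremal pair $p,q$ there is a common third agent $k$ with $\int M_{pk},\int M_{qk}\ge\mu$; I would track the three trajectories $x_p,x_q,x_k$ and show that $x_k$ is pulled toward both $x_p$ and $x_q$ enough, and symmetrically $x_p,x_q$ toward $x_k$, that after time $T$ the pair $x_p,x_q$ cannot still be at the full diameter — again yielding a factor $1-\gamma$. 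Fourth, iterate the one-step contraction to get \eqref{e-stimaexp}, and conclude consensus: $(d(nT))_n\to 0$ together with monotonicity of $d(t)$ gives $d(t)\to 0$, hence each $x_i(t)$ is Cauchy (its total variation on $[nT,\infty)$ is controlled by $\int_{nT}^\infty \dot x_i \le C\int_{nT}^\infty d(s)\,ds$, finite by geometric decay), so all $x_i(t)$ converge to a common limit $x^*$.

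I expect the main obstacle to be the ISC case of the one-step estimate (Step 3, second bullet): unlike PE-for-all, ISC gives no direct averaging between $p$ and $q$, only an indirect channel through $k$, and the ``not necessarily at the same time'' clause means one cannot simply add the two transfers. The right way through is a quantitative two-stage argument — split $[nT,(n+1)T]$ (or argue on the whole window) to show that either $x_p$ or $x_q$ must come within a fixed fraction of $x_k$, and that $x_k$ in turn has been dragged a fixed fraction of the way across, so the worst case still contracts the $p$–$q$ gap; making the constants uniform in the choice of extremal pair and in $N$ is the delicate bookkeeping. The secondary technical point is handling the Lipschitz variation of $\phi_{ij}(t)$ over a window of length $T$: since $|\dot x_i|$ is bounded in terms of $d(0)$ and the data, $\phi_{ij}(t)$ varies by at most $O(T\,\mathrm{Lip}(\phi)\,d(0))$, which must be absorbed so that the effective averaging weight stays bounded below by a positive constant; this is routine but is where \textbf{(H1)} is used.
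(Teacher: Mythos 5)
Your overall architecture (diameter non-increasing, hence a uniform lower bound $\phi_*>0$ on the relevant range; a one-window contraction factor $1-\gamma$; iteration; Cauchy argument for the limit $x^*$) is exactly the paper's, but the heart of the proof is not actually there. The crux is the quantitative one-window estimate, and the step you describe for \eqref{e-PE} --- ``every pair is averaged by at least $\mu\phi_*$, so a Gr\"onwall comparison on $|x_p-x_q|$ forces contraction'' --- does not close as stated. The direct coupling acts on the \emph{current} gap $x_p(t)-x_q(t)$ of the chosen pair, not on the diameter $d(nT)$: the pair realizing the diameter at the end of the window need not stay near the extremes during the window, and the times at which $M_{pq}$ is active (PE only controls its integral) may be times when the gap has already partially closed, so the integrated pull gives no definite displacement relative to $d(nT)$. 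What is needed, and what the paper supplies, is a one-sided barrier estimate (Lemma \ref{l-psi}: once a trajectory crosses the curve $\psi_L(\alpha,z,\tau)=\alpha+e^{-K_{\max}\tau}(z-\alpha)$ it cannot recross it, so any gain near an extreme can be eroded at most by the factor $e^{-K_{\max}T}$) combined with a dichotomy (Lemma \ref{l-2alternative}): either the bottom agent stays below its barrier for the whole window --- and then the integrated PE or ISC coupling with the top agent (or with the common agent $k$, whose position is itself kept away from the bottom by the barrier lemma) pushes it up by a definite fraction of $\beta-\alpha$, a contradiction --- or it escapes the barrier at some time, in which case the minimum has risen by at least $e^{-K_{\max}T}\gamma'$ at time $T$. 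You flag the ISC case as the main obstacle and only describe the intent of a two-stage argument; but the same missing dichotomy is already needed in your PE case, so as written neither case of Proposition \ref{p-1st} is established.

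A second, smaller gap is the reduction to $d=1$ ``by linearity'': the projected quantities $y_i=(x_i-x_0)\cdot w$ are \emph{not} a solution of \eqref{e-ODE}, because $\phi_{ij}$ depends on the full $\R^d$ distance, not on $|y_i-y_j|$. The paper's fix is to observe that the one-dimensional proof uses only the kernel bounds \eqref{sum of kernels less than eta max} and Proposition \ref{prop: contract of support}, which survive projection, and then to choose the direction $w$ aligned with the pair realizing the diameter \emph{at time} $T$, concluding via Cauchy--Schwarz that $\D(T)\le(1-\gamma)\D(0)$; choosing $w$ from the time-$0$ configuration would not suffice, since the projected diameter at time $T$ does not control $\D(T)$ for an arbitrary $w$. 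Your final step (geometric decay of $\D(nT)$ plus monotonicity gives integrability of $|\dot x_i|$ and hence a common limit) is fine.
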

The most interesting aspects of this result are the following: first, convergence holds for arbitrarily small $\mu>0$ and arbitrarily large $T>0$. This is strongly related to the fact that cooperative systems are dissipative, in the sense of \cite{CPRT}: even with no interaction, the maximal distance between agents does not increase. Hence, any push towards convergence is not lost forward in time.

The second main result focuses on second-order systems.
\begin{theorem}\label{t-flocking}
Let $\{x_i(t),v_i(t)\}_{i=1}^N$ be a solution of \eqref{e-ODE-2nd} with initial data $\{\bar x_i,\bar v_i\}_{i=1}^N$ and $\lambda_i=1$ for all $i\in \{1,\ldots,N\}$.

Assume that conditions (H1)-(H2)-(H3) of Theorem \ref{t-consensus} and PE holds for all $M_{ij}$. Let $\phi(x)$ be decreasing and satisfy $$\int_0^{+\infty}\phi(r)\,dr=+\infty,$$ i.e. $\phi$ is not integrable.

Then, flocking occurs: there exists $v^*\in\R^d$ such that $\lim_{t\to+\infty} v_i(t)=v^*$ and the trajectories $x_i(t)$ are bounded for all $i \in \{1,\ldots,N\}$.
\end{theorem}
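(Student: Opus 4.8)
The plan is to exploit the observation that the velocity variables $\{v_i(t)\}$ are themselves a solution of a first-order consensus system of the form \eqref{e-ODE}, namely
\begin{equation*}
\dot v_i(t)=\frac1N\sum_{j=1}^N M_{ij}(t)\,\widetilde\phi_{ij}(t)\,(v_j(t)-v_i(t)),\qquad \widetilde\phi_{ij}(t):=\phi(|x_i(t)-x_j(t)|)\ge 0,
\end{equation*}
whose weights are continuous in $t$, hence $\mathscr{L}^1$-measurable. Write $D_V(t):=\max_{i,j}|v_i(t)-v_j(t)|$ and $D_X(t):=\max_{i,j}|x_i(t)-x_j(t)|$. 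Since the $v$-system has nonnegative weights, the dissipativity of cooperative dynamics (exactly as in the first-order analysis, in the sense of \cite{CPRT}) gives that $D_V$ is non-increasing; moreover $|\dot x_i-\dot x_j|=|v_i-v_j|\le D_V(t)$, so $D_X$ is Lipschitz and, on a window $[nT,(n+1)T]$, $D_X(t)\le D_X(nT)+\int_{nT}^{t}D_V(s)\,ds\le D_X(nT)+T\,D_V(nT)$.

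The heart of the matter is a bootstrap keeping the influence uniformly positive. On $[nT,(n+1)T]$, since $\phi$ is decreasing, $\widetilde\phi_{ij}(t)=\phi(|x_i-x_j|)\ge\phi(D_X(t))\ge\phi(Y_n)$ with $Y_n:=D_X(nT)+T\,D_V(nT)$. Applying Proposition \ref{p-1st} to the first-order $v$-system on that window, with influence lower bound $\phi(Y_n)$, yields
\begin{equation*}
D_V((n+1)T)\ \le\ \bigl(1-\gamma(\phi(Y_n))\bigr)\,D_V(nT),
\end{equation*}
where $s\mapsto\gamma(s)$ denotes the explicit, increasing consensus rate of Proposition \ref{p-1st} read as a function of the influence lower bound (with $T,\mu,N$ fixed). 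Here it is essential that this rate be \emph{explicit and at least linear in $s$ as $s\to 0^+$}, i.e. $\gamma(s)\ge c\,s$ for small $s$ and some $c=c(T,\mu,N)>0$; this is exactly what assuming \eqref{e-PE} for every $M_{ij}$ buys, and what a merely qualitative rate (such as the one available under \eqref{e-ISC} alone, cf.\ \cite{bonnet2023consensus}) cannot provide — which is why Theorem \ref{t-flocking} is stated under PE.

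Next I would run a discrete analogue of the Ha--Liu energy argument for Cucker--Smale flocking. Set $\Psi(y):=\frac1T\int_0^y\gamma(\phi(r))\,dr$. Since $\gamma(\phi(\cdot))$ is non-increasing, $\Psi$ is concave and strictly increasing, and $\int_0^{+\infty}\phi=+\infty$ together with $\gamma(s)\ge c\,s$ forces $\Psi(y)\to+\infty$ as $y\to+\infty$. Using $Y_{n+1}-Y_n\le T\,D_V((n+1)T)\le T\,D_V(nT)$, the displayed one-step estimate, and concavity of $\Psi$ (so $\Psi(Y_{n+1})-\Psi(Y_n)\le\Psi'(Y_n)(Y_{n+1}-Y_n)=\tfrac1T\gamma(\phi(Y_n))(Y_{n+1}-Y_n)$), one checks that $\mathcal E_n:=D_V(nT)+\Psi(Y_n)$ satisfies $\mathcal E_{n+1}\le\mathcal E_n$. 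Being non-increasing and bounded below, $\mathcal E_n$ is bounded, hence $\Psi(Y_n)\le\mathcal E_0$ and therefore $Y_\infty:=\sup_n Y_n<+\infty$; in particular $D_X(t)\le Y_\infty$ for all $t\ge0$, which gives boundedness of the trajectories. Then $\phi(Y_n)\ge\phi(Y_\infty)>0$ and $\gamma(\phi(Y_n))\ge\gamma(\phi(Y_\infty))=:\gamma_*>0$, so $D_V(nT)\le(1-\gamma_*)^n D_V(0)\to0$, whence $D_V(t)\to0$ by monotonicity and $\int_0^{+\infty}D_V(s)\,ds<+\infty$. Finally $|\dot v_i(t)|\le\frac1N\sum_j\phi(0)\,|v_j-v_i|\le\phi(0)\,D_V(t)$ is integrable, so each $v_i(t)$ is Cauchy and converges (no symmetry of $M_{ij}\phi_{ij}$ is available, so the common mean is not conserved and one argues by the Cauchy criterion); since $D_V(t)\to0$ all limits coincide with a single $v^*\in\R^d$, which is alignment, and together with $\sup_t D_X(t)<+\infty$ this is flocking.

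The main obstacle — and the point the previous paragraphs must handle carefully — is justifying that Proposition \ref{p-1st} applies to the $v$-system with the window-dependent lower bound $\phi(Y_n)$: literally, \eqref{e-ODE} has state-feedback weights $\phi(|x_i-x_j|)$, whereas for the $v$-system the weights depend on $x$, not on $v$. One must therefore verify (as should already be implicit in the proof of Proposition \ref{p-1st}) that its estimate uses only a uniform positive lower bound on the active weights over each window together with the \eqref{e-PE} (resp.\ \eqref{e-ISC}) condition on the $M_{ij}$, not the specific functional form of the $\phi_{ij}$. Granting this, the remaining ingredients — monotonicity of $D_V$, the Lipschitz bound on $D_X$, the telescoping of $\mathcal E_n$, and the concluding Cauchy argument — are routine.
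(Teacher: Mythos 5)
Your proposal is correct and follows essentially the same route as the paper: a window-by-window application of the first-order rate of Proposition \ref{p-1st} to the velocity system with influence lower bound $\phi(\D_X(nT)+T\D_V(nT))$ (the legitimacy of which, as you note, rests only on the uniform bounds \eqref{sum of kernels less than eta max} and the PE condition on the $M_{ij}$), combined with the linear behaviour of the rate near zero and the non-integrability of $\phi$; your quantity $\D_V(nT)+\Psi(Y_n)$ is exactly the paper's integral estimate \eqref{e-DVnT} of Proposition \ref{p-2nd-stima} in telescoped form, and your divergence argument matches the corollary following Proposition \ref{p-2nd}. The only cosmetic differences are the concavity-based Lyapunov telescoping (versus the paper's induction on the integral inequality and the contradiction argument of Proposition \ref{p-2nd}) and the Cauchy-criterion argument for the common limit velocity (versus the subsequence/contractivity argument of Proposition \ref{p-DXbound}).
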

Also in this case, the key interest of this result is the following, at least for the case of $\lambda_i=1$: the condition of non-integrability of $\phi$, that is sufficient to ensure flocking of the original system \eqref{e-ODE-2nd-noM}, also ensures flocking of \eqref{e-ODE-2nd} under either PE for all $M_{ij}$ or ISC condition. Proposition \ref{p-2nd-stima} below provides a more precise estimate about the rate of convergence for the velocity variabile: clearly, this rate is lower in the PE or ISC case than for the original system.\\

The structure of the article is the following. In Section \ref{section: models}, we present some models for systems of the form \eqref{e-ODE-noM} or \eqref{e-ODE}. We also provide some general properties of the solutions. In Section \ref{s-R}, we prove some properties for the dynamics on the real line, that are essential for proving results. The detailed statements and proofs are presented in Section \ref{s-proofs}. We present conclusions and future directions in Section \ref{s-conclusions}.

\section{Models of opinion formation}\label{section: models}

In this section, we describe two important models for opinion formation of the form \eqref{e-ODE-noM}. The same ideas can then be translated to second-order systems of the form \eqref{e-ODE-2nd-noM}.

In the classical case, the function $\phi_{ij}(t)$ is symmetric and $\lambda_i = \lambda$ where $\lambda > 0$ is fixed \cite{HK,degroot1974reaching}. In this setting, the average value is preserved and the system is cooperative. If the influence function satisfies condition {\bf \nameref{hyp:phimin}}, then all $x_i$ converge to the average value. However, such a setting has some scaling problems for large number of agents. Indeed, large groups of agents may have strong impact on small groups, even though they are very far from each other. For this reason, a different scaling has been proposed in \cite{motsch2011new}:
\begin{eqnarray*}
    \lambda_i = \frac{N}{\sum_{j=1}^N \phi_{ij}(t)}.
\end{eqnarray*}
This rescaling introduces asymmetry in the dynamics, thus average is not preserved. Yet, also in this setting one reaches consensus under condition {\bf \nameref{hyp:phimin}}.

We treat both cases in a unitary way, from now on. For this reason, we define
\begin{eqnarray}\label{defof:phimin}
    \phi_{\min} := \min_{r \in [0,\max_{i,j \in \{1,\ldots N\}}|\bar x_i - \bar x_j|]}\phi(r)\\
     \phi_{\max} := \max_{r \in [0,\max_{i,j \in \{1,\ldots N\}}|\bar x_i - \bar x_j|]}\phi(r),\\
\label{defof:lambda_i}
    \lambda_i :=
    \begin{cases}
        1 \quad &\text{for fixed scaling}\\
        \frac{N}{\sum_{j=1}^N \phi_{ij}} \quad &\text{for normalized scaling,}
    \end{cases}\\
\label{defof:kmin}
     K_{\min} :=
    \begin{cases}
        \phi_{\min} \quad &\text{for fixed scaling}\\
        \frac{\phi_{\min}}{\phi_{\max}} \quad &\text{for normalized scaling.}
    \end{cases}\\
\label{defof:kmax}
     K_{\max} :=
    \begin{cases}
        \phi_{\max} \quad &\text{for fixed scaling}\\
        \frac{\phi_{\max}}{\phi_{\min}} \quad &\text{for normalized scaling,}
    \end{cases}
\end{eqnarray}
We use the following inequalities later:
\begin{eqnarray}\label{sum of kernels less than eta max}
    \frac{K_{\min}}{N}\sum_{j=1}^N M_{ij} \leq \frac{\lambda_i}{N}\sum_{j=1}^N M_{ij}\phi_{ij} \leq K_{\max}
\end{eqnarray}
for all $i \in \{1,\ldots, N\}$. They are direct consequences  of the definitions above and the condition $M_{ij} \in[0, 1]$. 

We also later use the following lemma, which proof is a direct computation.
\begin{lemma}\label{l-reverse}
    Given $\{x_1(t), \ldots, x_N(t)\}$ solution of \eqref{e-ODE}, then $\{-x_1(t), \ldots, -x_N(t)\}$ is a solution of \eqref{e-ODE} too.
    \end{lemma}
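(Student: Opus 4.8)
The plan is to verify the claim by a direct substitution, exploiting the fact that the right-hand side of \eqref{e-ODE} depends on the state only through pairwise differences $x_j-x_i$ and pairwise distances $|x_i-x_j|$. Set $y_i(t):=-x_i(t)$ for each $i\in\{1,\ldots,N\}$, and check that $\{y_i(t)\}_{i=1}^N$ satisfies the same system.

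The first step is to observe that every state-dependent coefficient on the right-hand side is invariant under the sign flip. Indeed, by \eqref{e-phi} the influence function evaluated along $\{y_i\}$ reads $\phi(|y_i-y_j|)=\phi(|{-x_i}-({-x_j})|)=\phi(|x_i-x_j|)=\phi_{ij}(t)$, so the $\phi_{ij}$ are unchanged. The weights $M_{ij}(t)$ do not depend on the state at all. Finally, in the normalized-scaling case the parameter $\lambda_i=\frac{N}{\sum_{j}\phi_{ij}}$ (see \eqref{defof:lambda_i}) is itself a function of the $\phi_{ij}$ only, hence it too is unchanged; the fixed-scaling case $\lambda_i=1$ is trivially invariant. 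Thus the only quantity that changes sign under $x_i\mapsto y_i$ is the difference factor $x_j-x_i$.

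The second step is the computation itself. Differentiating and using that $\{x_i\}$ solves \eqref{e-ODE},
\begin{equation*}
\dot y_i(t)=-\dot x_i(t)=-\frac{\lambda_i}{N}\sum_{j=1}^N M_{ij}(t)\phi_{ij}(t)\,(x_j(t)-x_i(t)).
\end{equation*}
Since $x_j-x_i=-(y_j-y_i)$, the external minus sign is absorbed, yielding
\begin{equation*}
\dot y_i(t)=\frac{\lambda_i}{N}\sum_{j=1}^N M_{ij}(t)\phi_{ij}(t)\,(y_j(t)-y_i(t)),
\end{equation*}
which is exactly \eqref{e-ODE} written for $\{y_i\}$, with the coefficients $\phi_{ij}$ and $\lambda_i$ already identified as those generated by the state $\{y_i\}$ in the first step. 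This establishes the claim.

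There is no genuine obstacle here: the statement is a structural symmetry of the dynamics, and the only point that deserves explicit mention is the invariance of $\phi_{ij}$ (and, in the normalized case, of $\lambda_i$) under negation, which follows because these quantities depend on the state only through the distances $|x_i-x_j|$. The odd (sign-reversing) dependence of the drift on the differences $x_j-x_i$ then produces the compensating sign. I would keep the write-up to a few lines, since the lemma's role is only to provide the reflection symmetry used later (e.g.\ to pass between upper and lower extremal agents in the consensus estimates).
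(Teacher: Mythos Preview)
Your proof is correct and is precisely the ``direct computation'' the paper alludes to without spelling out; you have simply made explicit the invariance of $\phi_{ij}$ and $\lambda_i$ under the sign flip and the sign cancellation in the difference term. Nothing further is needed.
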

    The interest of this lemma is that it permits to reverse several results, e.g. properties about the maximum becoming properties about the minimum. 

\subsection{General properties}

In this section, we prove  properties of solutions of \eqref{e-ODE}. We first show that the diameter satisfies a dissipative property. Before doing so, we provide a useful lemma.

\begin{lemma}\label{lemma:scalarproductinequality}
     Let $i,j\in \{1,\ldots,N\}$ be a pair of indices such that
    \begin{eqnarray*}
        \max_{k,l \in \{1,\ldots,N\}}|x_k - x_l|=|x_i - x_j|.
    \end{eqnarray*}
It then holds
\begin{eqnarray*}
&&    \max_{k \in \{1,\ldots,N\}}\langle x_k, x_i - x_j\rangle = \langle x_i, x_i - x_j\rangle \\
 &&   \min_{k \in \{1,\ldots,N\}}\langle x_k, x_i - x_j\rangle = \langle x_j, x_i - x_j\rangle .
\end{eqnarray*}
\end{lemma}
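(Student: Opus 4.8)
The plan is to reduce both claims to the maximality of $|x_i-x_j|$ via a single expansion of a squared norm. For the first identity, I would fix an arbitrary index $k$ and write $x_k - x_j = (x_k - x_i) + (x_i - x_j)$, so that
\begin{eqnarray*}
|x_k - x_j|^2 = |x_k - x_i|^2 + 2\langle x_k - x_i,\, x_i - x_j\rangle + |x_i - x_j|^2.
\end{eqnarray*}
Since $|x_k - x_j| \leq \max_{k,l}|x_k-x_l| = |x_i-x_j|$, the left-hand side is at most $|x_i-x_j|^2$, and since $|x_k-x_i|^2\geq 0$, this forces $\langle x_k - x_i,\, x_i - x_j\rangle \leq 0$, i.e. $\langle x_k,\, x_i-x_j\rangle \leq \langle x_i,\, x_i-x_j\rangle$ for every $k$. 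As equality holds trivially at $k=i$, the maximum over $k$ is attained at $k=i$, which is the first identity.

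For the second identity I would argue symmetrically, expanding instead $x_k - x_i = (x_k - x_j) + (x_j - x_i)$ to get
\begin{eqnarray*}
|x_k - x_i|^2 = |x_k - x_j|^2 + 2\langle x_k - x_j,\, x_j - x_i\rangle + |x_j - x_i|^2,
\end{eqnarray*}
and again use $|x_k-x_i|\leq|x_i-x_j|$ and $|x_k-x_j|^2\geq 0$ to deduce $\langle x_k - x_j,\, x_j-x_i\rangle \leq 0$, equivalently $\langle x_k,\, x_i-x_j\rangle \geq \langle x_j,\, x_i-x_j\rangle$ for all $k$, with equality at $k=j$. Hence the minimum is attained at $k=j$. (Alternatively, the second identity follows from the first applied with the roles of $i$ and $j$ interchanged, since $|x_j-x_i|$ also realizes the diameter.)

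There is essentially no obstacle here: the only thing to be slightly careful about is choosing the correct decomposition of the vector whose squared norm is expanded (splitting off $x_i-x_j$, the diameter-realizing direction), and noting that the optimum is genuinely \emph{attained} — not merely bounded — because the relevant inner product vanishes at $k=i$ (resp. $k=j$). The geometric content is simply that the extreme points $x_i, x_j$ of a diameter-realizing pair are the support points of the configuration in the direction $x_i-x_j$.
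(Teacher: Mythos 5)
Your argument is correct and complete: the expansion of $|x_k-x_j|^2$ (resp. $|x_k-x_i|^2$) together with the maximality of $|x_i-x_j|$ gives $\langle x_k, x_i-x_j\rangle \leq \langle x_i, x_i-x_j\rangle$ (resp. $\geq \langle x_j, x_i-x_j\rangle$) for every $k$, with equality at $k=i$ (resp. $k=j$), so both extrema are attained as claimed. The paper itself gives no explicit argument here — it only points to the analogous Lemma 3.4 of the graphon paper it cites, whose proof is the same standard support-point computation (there phrased via Cauchy--Schwarz, $\langle x_k-x_j, x_i-x_j\rangle \leq |x_k-x_j|\,|x_i-x_j| \leq |x_i-x_j|^2$, rather than your squared-norm expansion); your version is self-contained and equally valid, so there is nothing to fix.
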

\begin{proof}
    The proof is entirely similar to the proof \cite[Lemma 3.4]{bonnet2022consensus} in the context of graphons.
\end{proof}
\begin{proposition}\label{prop: contract of support}
        
    The function 
 \begin{eqnarray*}
      \gamma_{\max}(t):= \max_{i \in \{1,\ldots,N\}}\left|x_i(t)\right| 
 \end{eqnarray*}
 is non-increasing

 Similarly, the diameter 
 \begin{eqnarray*}
     \D(t):= \max_{i,j \in \{1,\ldots,N\}}\left|x_i(t)-x_j(t)\right| 
 \end{eqnarray*}
 is non-increasing
 
Similarly, on the real line, the function 
 \begin{eqnarray*}
     \gamma_{\min}(t):= \min_{i \in \{1,\ldots,N\}}x_i(t)
 \end{eqnarray*}
 is non-decreasing.
\end{proposition}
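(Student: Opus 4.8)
The plan is to prove the three monotonicity statements by a Grönwall-type / differential-inequality argument applied to the relevant extremal functions, using Lemma \ref{lemma:scalarproductinequality} to control the sign of the crucial scalar products. Since $\gamma_{\max}$, $\D$ and $\gamma_{\min}$ are maxima (or minima) of finitely many locally Lipschitz functions, they are themselves locally Lipschitz, hence differentiable almost everywhere, and at a point of differentiability the derivative can be computed by selecting an index (resp. a pair of indices) that realizes the extremum at that time. So it suffices to show that at almost every $t$ the upper Dini derivative of $\gamma_{\max}$ is $\le 0$, that of $\D$ is $\le 0$, and the lower Dini derivative of $\gamma_{\min}$ is $\ge 0$.

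For the diameter, first I would fix $t$ and choose $i,j$ with $\D(t) = |x_i(t) - x_j(t)|$. Then
\[
\frac{1}{2}\frac{d}{dt}|x_i - x_j|^2 = \langle \dot x_i - \dot x_j,\, x_i - x_j\rangle
= \frac{\lambda_i}{N}\sum_k M_{ik}\phi_{ik}\langle x_k - x_i, x_i - x_j\rangle
- \frac{\lambda_j}{N}\sum_k M_{jk}\phi_{jk}\langle x_k - x_j, x_i - x_j\rangle.
\]
By Lemma \ref{lemma:scalarproductinequality}, for every $k$ we have $\langle x_k - x_i, x_i - x_j\rangle \le 0$ and $\langle x_k - x_j, x_i - x_j\rangle \ge 0$; since all coefficients $\frac{\lambda_i}{N}M_{ik}\phi_{ik}\ge 0$ (and likewise for $j$), both sums contribute non-positively, so $\frac{d}{dt}|x_i-x_j|^2 \le 0$ at such $t$, which gives the non-increasing property of $\D$. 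The argument for $\gamma_{\max}(t) = |x_i(t)|$ is the same with $x_j$ replaced by the origin: $\frac{1}{2}\frac{d}{dt}|x_i|^2 = \frac{\lambda_i}{N}\sum_k M_{ik}\phi_{ik}\langle x_k - x_i, x_i\rangle$, and choosing $i$ to maximize $|x_k|$ makes each $\langle x_k - x_i, x_i\rangle = \langle x_k, x_i\rangle - |x_i|^2 \le |x_k||x_i| - |x_i|^2 \le 0$. For $\gamma_{\min}$ on $\R$, I would either repeat the computation directly (if $i$ achieves the minimum, $\dot x_i = \frac{\lambda_i}{N}\sum_k M_{ik}\phi_{ik}(x_k - x_i) \ge 0$ since $x_k \ge x_i$ for all $k$), or simply invoke Lemma \ref{l-reverse}: $-x_i$ solves the same system, so $-\gamma_{\min}(t) = \max_i(-x_i(t))$ is a "maximum" type quantity and hence non-increasing, giving $\gamma_{\min}$ non-decreasing.

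The one genuinely delicate point — the main obstacle — is the standard but non-trivial issue of differentiating a pointwise maximum of finitely many functions: the index realizing the maximum can switch, so $\D$ need not be differentiable everywhere. The clean way around this is to work with Dini derivatives and use the fact that for $f(t) = \max_\alpha f_\alpha(t)$ with each $f_\alpha$ differentiable, $D^+ f(t) \le \max\{\dot f_\alpha(t) : f_\alpha(t) = f(t)\}$; then a locally Lipschitz function whose upper Dini derivative is $\le 0$ a.e. is non-increasing. I would state this as a brief lemma or cite it, then apply it with $f_\alpha = |x_i - x_j|$ (or, to avoid the non-smoothness of $|\cdot|$ at $0$, work with $|x_i-x_j|^2$ and note $\D^2$ is non-increasing iff $\D$ is). All remaining steps are the routine sign computations above.
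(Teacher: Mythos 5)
Your proof is correct and takes essentially the same route as the paper: both differentiate the squared extremal quantities along active indices (the paper via Rademacher plus Danskin's theorem, you via the equivalent Dini-derivative lemma for finite maxima of Lipschitz functions), both use Lemma \ref{lemma:scalarproductinequality} to fix the signs of the scalar products, and both obtain the statement for $\gamma_{\min}$ via Lemma \ref{l-reverse} (or the trivial direct computation). The only cosmetic difference is that your sign argument for the diameter handles fixed and normalized scaling at once using only nonnegativity of the coefficients $\tfrac{\lambda_i}{N}M_{ik}\phi_{ik}$, whereas the paper writes out the normalized case with the $K_{\min}$ bound, which is not actually needed for monotonicity alone.
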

\begin{proof}
The functions $\gamma_{\max}$ and $\D$ are Lipschitz, since they are the pointwise maxima of a finite family of Lipschitz continuous functions. By Rademacher's theorem, they are differentiable almost everywhere. By Danskin's theorem (see \cite{danskin2012theory}) it  holds
  \begin{eqnarray*}
       \frac{1}{2}\frac{d}{dt}\gamma_{\max}^2(t) = \max_{i \in \Pi_1(t)}\left\langle \frac{d}{dt}x_i(t),x_i(t)\right\rangle
   \end{eqnarray*}
 where $\Pi_1(t) \in \{1,\ldots,N\}$ represents the nonempty subset of indices for which the maximum $\gamma_{\max}(\cdot)$ is reached. Fix an arbitrary $p\in \Pi_1(t)$ and observe that for all $j \in \{1,\ldots,N\}$ it  holds
\begin{eqnarray*}
    \left\langle x_{p}-x_j, x_{p}\right\rangle \geq 0,
\end{eqnarray*}
 which implies that for all $t \geq 0$ it holds
 \begin{eqnarray*}
     &\left\langle \frac{d}{dt}x_p(t),x_p(t)\right\rangle \\
     &= \frac{\lambda_p}{N}\sum_{j=1}^{N} M_{pj}(t)\phi_{pj}(t) \left\langle x_j(t) - x_p(t), x_p(t) \right\rangle \leq  0.
 \end{eqnarray*}
 Since this estimate holds for any $p \in \Pi_1(t)$, we have
\begin{eqnarray*}
    \frac{d}{dt}\gamma_{\max}(t) \leq 0 \quad \forall t \geq 0,
\end{eqnarray*}
i.e. the function $\gamma_{\max}$ is non-increasing. 

The statement for the size of the support is recovered as follows. Again, by using Danskin's theorem it holds
   \begin{eqnarray*}
       \frac{1}{2}\frac{d}{dt}\D^2(t) = \max_{i,j \in \Pi_2(t)}\left\langle \frac{d}{dt}(x_i(t) - x_j(t)), x_i(t) - x_j(t) \right\rangle
   \end{eqnarray*}
 where $\Pi_2(t) \in \{1,\ldots,N\} \times \{1,\ldots,N\}$ represents the nonempty subset of pairs of indices for which the maximum $\mathcal D(\cdot)$ is reached.  Fix arbitrary $p,q \in \Pi_2(t)$. For easier notation, from now on we hide the dependence on time. Notice that for the case of normalized scaling it holds
\begin{eqnarray*}
        &\left\langle\frac{d}{dt}(x_{p} - x_{q}),x_{p} - x_{q}\right \rangle \\
        &= -\frac{1}{\sum_{k=1}^N \phi_{p k}}\sum_{j=1}^{N} M_{p j} \phi_{p j}\left\langle x_{p}-x_j, x_{p} - x_{q}\right\rangle \\
   & - \frac{1}{\sum_{k=1}^N \phi_{q k}}\sum_{j=1}^{N} M_{q j} \phi_{q j}\left\langle x_j - x_{q},x_{p} - x_{q}\right\rangle .
        \end{eqnarray*}
By Lemma \ref{lemma:scalarproductinequality}, for all $j \in \{1,\ldots,N\}$ it holds
\begin{eqnarray*}
    \left\langle x_{p}-x_j, x_{p} - x_{q}\right\rangle \geq 0 \quad \text{and} \quad \left\langle x_j - x_{q},x_{p} - x_{q}\right\rangle \geq 0
\end{eqnarray*}
and therefore, by \eqref{defof:kmin}, it holds
\begin{eqnarray*}
         &\left\langle\frac{d}{dt}(x_{p} - x_{q}),x_{p} - x_{q}\right \rangle\\
        &\leq -\frac{K_{\min}}{N} \left(\sum_{j=1}^{N} M_{p j} \left\langle x_{p}-x_j, x_{p} - x_{q}\right\rangle \right.\\
        &\quad \left.+ \sum_{j=1}^{N} M_{q j}\left\langle x_j - x_{q},x_{p} - x_{q}\right\rangle \right)\leq 0.
\end{eqnarray*}

The statement for the minimum on the real line can be recovered by Lemma \ref{l-reverse}.
\end{proof}

\section{Dynamics on the real line} \label{s-R}

In this section, we study the case $d=1$, i.e. in which the configuration space is the real line $\R$. In this setting, ordering is clearly a major advantage.

We introduce two useful operators, that are $\psi_L(\alpha,z,\tau)$ and $\psi_R(\beta,z,\tau)$. The idea for $\psi_L$ is that, given an initial configuration in which the position of all agents has a value larger than $\alpha$, the quantity $\psi_L(\alpha,z,\tau)$ represents a lower barrier for the position at time $t=\tau$ of a particle starting at $z$ at time $t=0$: if the barrier is crossed to the right, then the trajectory cannot go back to its left side. A reverse reasoning holds for $\psi_R(\beta,z,\tau)$: if the barrier is crossed to the left, then the trajectory cannot go back to its right side.
\begin{lemma}\label{l-psi}
Let $\alpha, \beta,z\in\R$, $T> 0$ and $\tau \in [0,T]$. Define 
\begin{eqnarray}
\label{e-psi}    
\psi_L(\alpha,z,\tau)&:=&\alpha+e^{-K_{\max}\tau}(z-\alpha),\\
\psi_R(\beta,z,\tau)&:=&\beta-e^{-K_{\max}\tau}(\beta-z)
\end{eqnarray}
where $K_{\max}$ is defined by \eqref{defof:kmax}.
Let $$x(t) := \{x_1(t),\ldots,x_N(t)\}$$  be a solution of \eqref{e-ODE} on $\R$ that satisfies $x_j(\theta)\geq \alpha$ for all $j\in\{1,\ldots,N\}$ for some $\theta \geq 0$. If there exists an index $i$ and $\tau^*\in [0,T]$ such that $x_i(\theta+\tau^*)\geq \psi_L(\alpha,z,\tau^*)$, then
\begin{equation}
\label{e-stimapsiL}  x_i(\theta+\tau)\geq \psi_L(\alpha,z,\tau)~~~\mbox{~for all }\tau\in[\tau^*,T].
\end{equation}
Similarly, let $x(t) := \{x_1(t),\ldots,x_N(t)\} $  be a solution of \eqref{e-ODE} on $\R$ that satisfies $x_j(\theta)\leq \beta$ for all $j\in\{1,\ldots,N\}$ for some $\theta \geq 0$. If there exists an index $i$ and $\tau^*\in [0,T]$ such that $x_i(\theta+\tau^*)\leq \psi_R(\beta,z,\tau^*)$, then
\begin{equation}
\label{e-stimapsiR}  x_i(\theta+\tau)\leq \psi_R(\beta,z,\tau)~~~\mbox{~for all }\tau\in[\tau^*,T].
\end{equation}

\end{lemma}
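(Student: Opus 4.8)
The plan is to reduce everything to a scalar comparison argument for the single trajectory $x_i$, after first noting that the hypothesis on the whole configuration at time $\theta$ propagates forward. Indeed, by Proposition~\ref{prop: contract of support} the function $\gamma_{\min}(t)=\min_j x_j(t)$ is non-decreasing on $\R$, so $x_j(\theta)\ge\alpha$ for all $j$ forces $x_j(\theta+\tau)\ge\alpha$ for all $\tau\ge0$ and all $j$. In particular $x_i(\theta+\tau)\ge\alpha$, and it is this fact that will control the ``incoming'' part of the dynamics of $x_i$.

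Next I would isolate the differential inequality satisfied by $x_i$. Setting $a_i(t):=\frac{\lambda_i}{N}\sum_{j=1}^N M_{ij}(t)\phi_{ij}(t)$, equation~\eqref{e-ODE} reads $\dot x_i=\frac{\lambda_i}{N}\sum_j M_{ij}\phi_{ij}x_j-a_i x_i$; since $M_{ij}\phi_{ij}\ge0$ and $x_j(\theta+\cdot)\ge\alpha$, the sum is $\ge a_i\alpha$, hence
\[
\dot x_i(\theta+\tau)\;\ge\;-\,a_i(\theta+\tau)\bigl(x_i(\theta+\tau)-\alpha\bigr),\qquad\tau\ge0,
\]
while~\eqref{sum of kernels less than eta max} gives $0\le a_i\le K_{\max}$. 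Put $y(\tau):=x_i(\theta+\tau)-\alpha\ge0$. Because $y\ge0$ and $a_i\le K_{\max}$, we get $\dot y\ge-a_iy\ge-K_{\max}y$, i.e.\ $\tau\mapsto e^{K_{\max}\tau}y(\tau)$ is non-decreasing (it is absolutely continuous with a.e.\ non-negative derivative, $x_i$ being locally Lipschitz). Since $e^{K_{\max}\tau}\bigl(\psi_L(\alpha,z,\tau)-\alpha\bigr)=z-\alpha$ is constant, the hypothesis $x_i(\theta+\tau^*)\ge\psi_L(\alpha,z,\tau^*)$, multiplied by $e^{K_{\max}\tau^*}$, reads $e^{K_{\max}\tau^*}y(\tau^*)\ge z-\alpha$, and therefore for $\tau\in[\tau^*,T]$
\[
e^{K_{\max}\tau}y(\tau)\;\ge\;e^{K_{\max}\tau^*}y(\tau^*)\;\ge\;z-\alpha\;=\;e^{K_{\max}\tau}\bigl(\psi_L(\alpha,z,\tau)-\alpha\bigr),
\]
which after dividing by $e^{K_{\max}\tau}$ is exactly~\eqref{e-stimapsiL}. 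Notice this works irrespective of the sign of $z-\alpha$; positivity is used only through $y\ge0$.

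The statement for $\psi_R$ then follows from the one for $\psi_L$ via the reflection symmetry of Lemma~\ref{l-reverse}: $\tilde x_j:=-x_j$ solves~\eqref{e-ODE}, satisfies $\tilde x_j(\theta)\ge-\beta$, and one has the algebraic identity $-\psi_R(\beta,z,\tau)=\psi_L(-\beta,-z,\tau)$. Applying the first part to $\tilde x$ with barrier level $-\beta$ and starting point $-z$ turns $x_i(\theta+\tau^*)\le\psi_R(\beta,z,\tau^*)$ into $\tilde x_i(\theta+\tau^*)\ge\psi_L(-\beta,-z,\tau^*)$ and the resulting conclusion back into~\eqref{e-stimapsiR}.

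I do not anticipate a genuine obstacle: the argument is elementary once one sees that the time-varying rate $a_i(t)$ may be replaced by the constant $K_{\max}$ precisely because $y$ stays non-negative, so that the sharp linear ODE $\dot w=-K_{\max}w$ defining $\psi_L$ is a legitimate lower barrier. The only points deserving care are the measurability/absolute-continuity bookkeeping (so that the Grönwall-type monotonicity of $e^{K_{\max}\tau}y$ is rigorous under hypothesis \textbf{(H2)}) and correctly tracking the time shift $\theta$ together with the fact that the ``no going back'' conclusion is asserted only from $\tau^*$ onward.
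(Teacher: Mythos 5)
Your argument is correct, and it reaches the conclusion by a slightly different (and in fact leaner) route than the paper. The paper works with the difference $x_i(t)-\psi_L(\alpha,z,t-\theta)$ directly: it splits the interaction sum according to whether $x_j$ lies below or above the barrier, uses $x_j\ge\alpha$ (from Proposition~\ref{prop: contract of support}) together with \eqref{sum of kernels less than eta max} to cancel the $K_{\max}e^{-K_{\max}\tau}(z-\alpha)$ term coming from $\dot\psi_L$, arrives at $\partial_t(x_i-\psi_L)\ge a(t)(x_i-\psi_L)$ with the time-varying rate $a(t)=-\frac{\lambda_i}{N}\sum_j M_{ij}\phi_{ij}$, and closes with Gr\"onwall; note that this cancellation step implicitly uses $z\ge\alpha$, which is how the lemma is applied later. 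You instead derive the single constant-rate inequality $\dot y\ge -K_{\max}y$ for $y=x_i-\alpha$ (using $y\ge0$ from Proposition~\ref{prop: contract of support} and $a_i\le K_{\max}$ from \eqref{sum of kernels less than eta max}), so that $e^{K_{\max}\tau}y(\tau)$ is non-decreasing, and then compare with the constant $e^{K_{\max}\tau}(\psi_L-\alpha)=z-\alpha$. This avoids the sum-splitting and the cancellation, works for either sign of $z-\alpha$, and gives marginally more information (monotonicity of the exponentially weighted distance to $\alpha$, not just persistence of the sign of $x_i-\psi_L$). The treatment of $\psi_R$ by reflection via Lemma~\ref{l-reverse}, with the identity $-\psi_R(\beta,z,\tau)=\psi_L(-\beta,-z,\tau)$, matches the paper's, which likewise deduces the second estimate from the first; your version simply makes the reflection explicit. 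The regularity bookkeeping (local Lipschitz continuity of $x_i$, a.e.\ differentiability, absolute continuity of $e^{K_{\max}\tau}y$) is handled adequately.
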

\begin{proof} We only prove the first estimate, the second one being completely equivalent thanks to Lemma \ref{l-reverse}. Define $\tau:=t-\theta$ and for $t > \theta$ compute 
    \begin{eqnarray*}
        &&\partial_t( x_i(t)-\psi_L(\alpha,z,t-\theta))=\\
        &&\frac{\lambda_i}{N}\sum_{x_j(t)\leq \psi_L(\alpha,z,\tau)} M_{ij}(t)\phi_{ij}(t)(x_j(t) - x_i(t))+\\
        &&\frac{\lambda_i}{N}\sum_{x_j(t)> \psi_L(\alpha,z,\tau)} M_{ij}(t)\phi_{ij}(t)(x_j(t) - x_i(t))\\
        &&+ K_{\max}e^{-K_{\max}\tau}(z-\alpha).
        \end{eqnarray*}
        In the first term,  write 
        \begin{eqnarray*}
        && x_j(t) - x_i(t)\geq\\
        && (\alpha -\psi_L(\alpha,z,\tau))+(\psi_L(\alpha,z,\tau)-x_i(t))\\
        && = -e^{-K_{\max}\tau}(z-\alpha)+(\psi_L(\alpha,z,\tau)-x_i(t)).
        \end{eqnarray*}
    
        Here we used Proposition \ref{prop: contract of support}, since $$x_j(t)\geq \gamma_{\min}(t)\geq \gamma_{\min}(\theta)\geq \alpha.$$
        Using \eqref{sum of kernels less than eta max}, we then estimate 
        \begin{eqnarray*}
        &&\partial_t( x_i(t)-\psi_L(\alpha,z,t-\theta))\geq\\
        &&\frac{\lambda_i}{N}\sum_{x_j(t)\leq \psi_L(\alpha,z,\tau)} M_{ij}(t)\phi_{ij}(t) (-e^{-K_{\max}\tau}(z-\alpha))+\\
        &&\frac{\lambda_i}{N}\sum_{x_j(t)\leq \psi_L(\alpha,z,\tau)} M_{ij}(t)\phi_{ij}(t)
        (\psi_L(\alpha,z,\tau)-x_i(t)))+\\
        &&\frac{\lambda_i}{N}\sum_{x_j(t)> \psi_L(\alpha,z,\tau)} M_{ij}(t)\phi_{ij}(t)(\psi_L(\alpha,z,\tau) - x_i(t))\\
    &&+K_{\max}e^{-K_{\max}\tau}(z-\alpha)\geq K_{\max} (-e^{-K_{\max}\tau}(z-\alpha))\\
      && +\frac{\lambda_i}{N}\sum_{j=1}^N M_{ij}(t)\phi_{ij}(t)
        (\psi_L(\alpha,z,\tau)-x_i(t)))+\\
    &&K_{\max}e^{-K_{\max}\tau}(z-\alpha)=\\
        &&a(t)(x_i(t)-\psi_L(\alpha,z,\tau)),
                \end{eqnarray*}
                where $$a(t):=-\frac{\lambda_i}{N}\sum_{j=1}^{N}M_{ij}(t)\phi_{ij}(t).$$
              
Recall that $x_i(\theta+\tau^*)-\psi_L(\alpha,z,\tau^*)\geq 0$. We now apply Gr\"onwall's inequality on $x_i(t)-\psi_L(\alpha,z,t-\theta)$ with $t\in[\theta+\tau^*,\theta+T]$. It ensures
\begin{eqnarray*}
&&x_i(\theta+\tau)-\psi_L(\alpha,z,\tau)\geq \\
&&e^{\int_{\tau^*}^\tau a(t)\,dt}\left(x_i(\theta+\tau^*)-\psi_L(\tau^*)\right)\geq 0.
\end{eqnarray*}
This proves \eqref{e-stimapsiL}.
\end{proof}

We are now ready to prove a key result: it provides a quantitative estimate showing that it is impossible to have trajectories staying at both extremes of the domain for a whole time interval $[0,T]$.

\begin{lemma} \label{l-2alternative} Let $x(t) := \{x_1(t),\ldots,x_N(t)\} $  be a solution of \eqref{e-ODE} on $\R$. Define
\begin{eqnarray*}
      \alpha := \min_{i \in \{1,\ldots,N\}} x_i(0), \quad  \beta := \max_{i \in \{1,\ldots,N\}} x_i(0).
  \end{eqnarray*}

\begin{itemize}
\item If PE holds for all $M_{ij}$, define:
\begin{equation}\label{e-gammaPE}
\tilde \gamma:=\frac{\mu K_{min}}{N(1+ K_{max}T)+2\mu K_{min}}
\end{equation}
and 
\begin{equation}\label{e-gammaprimePE}
\gamma':=\tilde \gamma(\beta-\alpha).
\end{equation}
\item If ISC holds, define
\begin{equation}\label{e-gammaISC}
\tilde \gamma:=\frac{
\mu K_{min}}{2(
N(1+K_{max}T)+\mu K_{min})}
\end{equation}
and 
\begin{equation}\label{e-gammaprimeISC}
\gamma':=\exp(-K_{max}T)\tilde \gamma(\beta-\alpha).
\end{equation}
\end{itemize}

  If there exists an index $i\in\{1,\ldots,N\}$ satisfying $x_i(t)\leq \psi_L(\alpha,\alpha+\gamma',t)$ for all $t\in[0,T]$, then there exists no index $j\in \{1,\ldots,N\}$ satisfying $x_j(t)\geq \psi_R(\beta,\beta-\gamma',t)$ for all $t\in [0,T]$.
\end{lemma}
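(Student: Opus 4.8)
The plan is to argue by contradiction. Suppose there are indices $i,j\in\{1,\ldots,N\}$ with $x_i(t)\leq\psi_L(\alpha,\alpha+\gamma',t)$ and $x_j(t)\geq\psi_R(\beta,\beta-\gamma',t)$ for all $t\in[0,T]$; I will show this forces $\tilde\gamma$ to be strictly larger than the very value it is defined to equal. We may assume $\beta>\alpha$, the case $\beta=\alpha$ being degenerate (all agents coincide for all times). By Proposition \ref{prop: contract of support} and Lemma \ref{l-reverse} every trajectory stays in $[\alpha,\beta]$; in particular $x_i(t)\in[\alpha,\alpha+e^{-K_{\max}t}\gamma']$ and $x_j(t)\in[\beta-e^{-K_{\max}t}\gamma',\beta]$ for all $t\in[0,T]$, and $e^{-K_{\max}t}\gamma'\leq\gamma'$ since $e^{-K_{\max}t}\leq 1$.

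In the PE case, I would isolate in the dynamics of $x_i$ the term coming from the interaction with $j$: writing $\dot x_i=\frac{\lambda_i}{N}M_{ij}\phi_{ij}(x_j-x_i)+\frac{\lambda_i}{N}\sum_{l\neq j}M_{il}\phi_{il}(x_l-x_i)$, one has $x_j(t)-x_i(t)\geq(\beta-\alpha)-2\gamma'\geq 0$ (this is where $\gamma'=\tilde\gamma(\beta-\alpha)$ with $\tilde\gamma<\frac{1}{2}$ is used) and $x_l(t)-x_i(t)\geq-\gamma'$ for $l\neq j$. Using $\frac{\lambda_i}{N}\phi_{ij}\geq\frac{K_{\min}}{N}$ and $\frac{\lambda_i}{N}\sum_l M_{il}\phi_{il}\leq K_{\max}$ from \eqref{sum of kernels less than eta max}, this yields $\dot x_i(t)\geq\frac{K_{\min}}{N}M_{ij}(t)\big((\beta-\alpha)-2\gamma'\big)-K_{\max}\gamma'$. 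Integrating on $[0,T]$, using $\int_0^T M_{ij}\geq\mu$ from \eqref{e-PE} and $x_i(0)\geq\alpha$, gives $x_i(T)\geq\alpha+\frac{\mu K_{\min}}{N}\big((\beta-\alpha)-2\gamma'\big)-K_{\max}\gamma' T$. Comparing with the barrier bound $x_i(T)\leq\alpha+e^{-K_{\max}T}\gamma'$ and using $e^{-K_{\max}T}<1$ (valid since $K_{\max}>0$ by (H3) and $T>0$), one gets the strict inequality $\frac{\mu K_{\min}}{N}\big((\beta-\alpha)-2\gamma'\big)<\gamma'(1+K_{\max}T)$; substituting $\gamma'=\tilde\gamma(\beta-\alpha)$ and dividing by $\beta-\alpha$ rearranges exactly to $\tilde\gamma>\frac{\mu K_{\min}}{N(1+K_{\max}T)+2\mu K_{\min}}$, contradicting \eqref{e-gammaPE}.

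In the ISC case the difficulty --- and the reason for the extra factors $e^{-K_{\max}T}$ and $\frac{1}{2}$ appearing in \eqref{e-gammaISC} --- is that the agent $k$ furnished by \eqref{e-ISC}, for which $\int_0^T M_{ik}\geq\mu$ and $\int_0^T M_{jk}\geq\mu$, need not sit near either extreme. The key step is a dichotomy obtained from Lemma \ref{l-psi} applied with $\theta=0$ and $z=\frac{\alpha+\beta}{2}$: since $x_k(0)\in[\alpha,\beta]$, either $x_k(0)\geq\frac{\alpha+\beta}{2}=\psi_L(\alpha,\frac{\alpha+\beta}{2},0)$, which forces $x_k(t)\geq\psi_L(\alpha,\frac{\alpha+\beta}{2},t)\geq\alpha+e^{-K_{\max}T}\frac{\beta-\alpha}{2}$ for all $t\in[0,T]$, or the symmetric alternative $x_k(t)\leq\psi_R(\beta,\frac{\alpha+\beta}{2},t)\leq\beta-e^{-K_{\max}T}\frac{\beta-\alpha}{2}$ for all $t\in[0,T]$. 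In the first alternative I would repeat the PE-type estimate for $x_i$, now isolating the $M_{ik}$ term, with right-drift lower bound $x_k(t)-x_i(t)\geq e^{-K_{\max}T}\frac{\beta-\alpha}{2}-\gamma'\geq 0$ (nonnegative because $\gamma'=e^{-K_{\max}T}\tilde\gamma(\beta-\alpha)$ with $\tilde\gamma\leq\frac{1}{2}$); integrating, using $\int_0^T M_{ik}\geq\mu$, comparing with $x_i(T)\leq\alpha+e^{-K_{\max}T}\gamma'$ and again using $e^{-K_{\max}T}<1$, the definitions \eqref{e-gammaISC}--\eqref{e-gammaprimeISC} produce the strict inequality $\tilde\gamma>\frac{\mu K_{\min}}{2(N(1+K_{\max}T)+\mu K_{\min})}$, a contradiction. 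The second alternative is handled identically for the trajectory $x_j$, reducing to the first one via Lemma \ref{l-reverse}.

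The step I expect to be the main obstacle is pinning down the location of the mediator $k$ in the ISC case: one must guarantee that $k$ stays on a definite side of the propagated midpoint barrier for the \emph{whole} window $[0,T]$, and Lemma \ref{l-psi} is precisely the tool that makes this work, at the cost of the $e^{-K_{\max}T}$ loss. Once this is available, both cases collapse to the same ``integrate the isolated drift term, use $\int_0^T M\geq\mu$, compare with the $\psi$-barrier'' computation, whose only delicate point is keeping every final inequality strict --- which is exactly where $e^{-K_{\max}T}<1$ (equivalently $K_{\max}T>0$) is needed --- so that equality in the definition of $\tilde\gamma$ becomes a genuine contradiction.
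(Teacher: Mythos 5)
Your proposal is correct and follows essentially the same strategy as the paper's proof: a contradiction argument that isolates the drift term coming from the far agent ($j$ under PE, the ISC mediator $k$ located on one side of the midpoint via Lemma \ref{l-psi} with $z=\frac{\alpha+\beta}{2}$, the other side handled symmetrically through $x_j$ and Lemma \ref{l-reverse}), bounds the remaining terms by $-K_{\max}\gamma'$ using Proposition \ref{prop: contract of support} and \eqref{sum of kernels less than eta max}, and integrates over $[0,T]$ with the $\mu$-lower bound. The only differences are cosmetic bookkeeping: you obtain strictness from $e^{-K_{\max}T}<1$ and phrase the contradiction as $\tilde\gamma$ exceeding its defining value, whereas the paper uses the $\frac{N-1}{N}$ slack and shows directly $x_i(T)>\alpha+\gamma'$.
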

\begin{proof} By contradiction, let $i,j$ be such that it both holds $x_i(t)\leq \psi_L(\alpha,\alpha+\gamma',t)$ and $x_j(t)\geq \psi_R(\beta,\beta-\gamma',t)$ for all $t\in[0,T]$. Since $\psi_L$ is decreasing with respect to $t$ and $\psi_R$ is increasing, this implies $x_i(t)\leq\alpha+\gamma'$ and $x_j(t)\geq \beta-\gamma'$, for all $\tau\in[0,T]$.

We now study the two cases separately:
\begin{itemize}
\item Let \eqref{e-PE} hold for all $M_{ij}$. We first estimate $\dot x_i$: it holds
\begin{eqnarray*}
\dot x_i(t)&=&\frac{\lambda_i}{N}\sum_{x_l(t)\leq x_i(t)} M_{il}(t)\phi_{il}(t)(x_l(t) - x_i(t))+\\
        &&\frac{\lambda_i}{N}\sum_{x_l(t)> x_i(t)} M_{il}(t)\phi_{il}(t)(x_l(t) - x_i(t)).
        \end{eqnarray*}
        The first summation contains no more than $N-1$ terms; for each of them, we observe that $x_l(t)\leq x_i(t)\leq \alpha+\gamma'$, hence $x_l(t) - x_i(t)\geq -\gamma'$.
        
        Each term in the second summation satisfies $M_{il}(t)\phi_{il}(t)(x_l(t) - x_i(t))\geq 0$. 
        Moreover, observe that 
        since
        $\tilde \gamma<\frac12$
        we have
        $ \beta-\alpha-2\gamma'\geq 0$.
        Hence,
        we deduce that
        the second summation in particular contains $x_j$, for which it holds $x_j(t)-x_i(t)\geq \beta-\alpha-2\gamma'$. As a result, it holds
        \begin{eqnarray*}
\dot x_i(t)&\geq &-\frac{N-1}{N}K_{max}\gamma'+0+\\
&&\frac{\lambda_i}{N}M_{ij}(t)\phi_{ij}(t)(\beta-\alpha-2\gamma').
        \end{eqnarray*}

By integrating on the time interval $[0,T]$ and recalling the condition \eqref{e-PE}, it holds
\begin{eqnarray*}       
x_i(T)&>&x_i(0)-K_{max} T\gamma'+\mu \frac{K_{min}}N (\beta-\alpha-2\gamma')\geq\\
&\geq &\alpha+(\beta-\alpha)\left(-K_{max} T\tilde\gamma+\mu \frac{K_{min}}N (1-2\tilde\gamma)\right)\\
&=&\alpha+(\beta-\alpha)\tilde\gamma=\alpha+\gamma'.
\end{eqnarray*}
This raises a contradiction.
\item Let \eqref{e-ISC} hold: for the given $i,j$ indexes, there then exists an index $k$ for which \eqref{e-ISC} holds. It either holds $x_k(0)\geq \frac{\beta+\alpha}2$ or $x_k(0)< \frac{\beta+\alpha}2$. We now assume that the first condition holds. We apply Lemma \ref{l-psi} to $x_k(\tau)$ with $\theta=\tau^*=0$: it holds 
\begin{eqnarray*}
&&x_k(t)\geq \psi_L\left(\alpha,\frac{\beta+\alpha}2,t\right)\geq \psi_L\left(\alpha,\frac{\beta+\alpha}2,T\right)=\\
&&\alpha+\exp(-K_{max}T)\left(\frac{\beta+\alpha}2-\alpha\right)=\\
&&\alpha+\exp(-K_{max}T)\left(\frac{\beta-\alpha}2\right).
\end{eqnarray*}

This implies
\begin{eqnarray}
&&x_k(t)-x_i(t)\geq\nonumber\\
&& \alpha+\exp(-K_{max}T)\left(\frac{\beta-\alpha}2\right)-(\alpha+\gamma')=\nonumber\\
&&\exp(-K_{max}T)\left(\frac12-\tilde\gamma\right)(\beta-\alpha). \label{e-xkxi}
\end{eqnarray}
Here, it is useful to observe that $\tilde\gamma<\frac12$.

We are now ready to estimate $\dot x_i$. Estimates are similar to the previous case. It holds
\begin{eqnarray*}
\dot x_i(t)&=&\frac{\lambda_i}{N}\sum_{x_l(t)\leq x_i(t)} M_{il}(t)\phi_{il}(t)(x_l(t) - x_i(t))+\\
        &&\frac{\lambda_i}{N}\sum_{x_l(t)> x_i(t)} M_{il}(t)\phi_{il}(t)(x_l(t) - x_i(t)).
        \end{eqnarray*}
        For the first summation of no more than $N-1$ terms, we again use $x_l(t) - x_i(t)\geq -\exp(-K_{max}T)\tilde\gamma(\beta-
        \alpha)$. For the second summation, we use $M_{il}(t)\phi_{il}(t)(x_l(t) - x_i(t))\geq 0$. In particular, the second term contains $x_k$, for which \eqref{e-xkxi} holds. As a result, it holds
         \begin{eqnarray*}
&&\dot x_i(t)\geq -\frac{N-1}{N}K_{max}\exp(-K_{max}T)\tilde\gamma(\beta-\alpha)+0+\\
&&\frac{\lambda_i}{N}M_{ik}(t)\phi_{ik}(t)\exp(-K_{max}T)\left(\frac12-\tilde\gamma\right)(\beta-\alpha).
        \end{eqnarray*}
        By integrating on the time interval $[0,T]$ and recalling condition \eqref{e-ISC}, it holds
\begin{eqnarray*}       
x_i(T)&>&x_i(0)-K_{max} T\exp(-K_{max}T)\tilde\gamma(\beta-\alpha)+\\
&&\mu \frac{K_{min}}N \exp(-K_{max}T)\left(\frac12-\tilde\gamma\right)(\beta-\alpha)\geq\\
&& \alpha+(\beta-\alpha)\exp(-K_{max}T)\tilde\gamma=\alpha+\gamma'.
\end{eqnarray*}
This raises a contradiction, as in the PE case.

We now assume that $x_k(0)< \frac{\beta+\alpha}2$. With completely similar computations, by applying Lemma \ref{l-psi} to $x_j$ and using $\psi_R$, one proves that $x_j(T)<\beta-\gamma'$, that raises a similar contradiction.
\end{itemize}
\end{proof}

\section{Proof of main results} \label{s-proofs}
In this section, we prove Theorems \ref{t-consensus} and \ref{t-flocking}. We first prove  Theorem \ref{t-consensus} for the 1-dimensional case, then for a general dimension $d>1$. We finally prove Theorem \ref{t-flocking}, as a consequence of Theorem \ref{t-consensus}.

\subsection{Proof of Theorem \ref{t-consensus} in $\R$}

\label{s-proof1}
In this section, we prove Theorem \ref{t-consensus} when the configuration space is $\R$. We prove it by stating and proving the following proposition, in which the rate of convergence is explicitly computed.

\begin{proposition}\label{p-1st} Let $x(t) := \{x_1(t),\ldots,x_N(t)\} $  be a solution of \eqref{e-ODE} on $\R$. Let the hypotheses of Theorem \ref{t-consensus} hold, with given $T,\mu>0$. Let $K_{max}$ be defined by \eqref{defof:kmax} and $\tilde \gamma$ as in Lemma \ref{l-2alternative}. It then holds 
\begin{eqnarray}\label{e-stimaexp2}
&&\max_{i,j}|x_i(nT)-x_j(nT)|\leq\\
&& (1-\exp(-\eta K_{max}T)\tilde \gamma)^n \max_{i,j}|x_i(0)-x_j(0)|.\nonumber
\end{eqnarray}
with $\eta=1$ for PE and $\eta=2$ for ISC.
\end{proposition}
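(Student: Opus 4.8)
The plan is to prove the single-step contraction
\[
\D(T)\;\le\;\D(0)-e^{-K_{max}T}\gamma' ,
\]
where $\D$ is the diameter of Proposition~\ref{prop: contract of support}, $\alpha:=\min_i x_i(0)$, $\beta:=\max_i x_i(0)$ (so that $\D(0)=\beta-\alpha$), and $\gamma'$ is the quantity from Lemma~\ref{l-2alternative}, which equals $\tilde\gamma(\beta-\alpha)$ in the PE case \eqref{e-gammaprimePE} and $e^{-K_{max}T}\tilde\gamma(\beta-\alpha)$ in the ISC case \eqref{e-gammaprimeISC}. The estimate \eqref{e-stimaexp2} then follows by iterating this contraction over the windows $[nT,(n+1)T]$. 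If $\beta=\alpha$ the configuration is already a consensus and there is nothing to prove, so we may assume $\beta>\alpha$.

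First I would invoke Lemma~\ref{l-2alternative}: the two situations \emph{(a)} ``there is an index $i$ with $x_i(t)\le\psi_L(\alpha,\alpha+\gamma',t)$ for all $t\in[0,T]$'' and \emph{(b)} ``there is an index $j$ with $x_j(t)\ge\psi_R(\beta,\beta-\gamma',t)$ for all $t\in[0,T]$'' cannot both hold; hence at least one of (a), (b) fails, and by the reflection symmetry of Lemma~\ref{l-reverse} the two cases are mirror images, so it is enough to treat one. Suppose (b) fails. Then for \emph{every} index $j$ there is a time $\tau_j\in[0,T]$ with $x_j(\tau_j)\le\psi_R(\beta,\beta-\gamma',\tau_j)$; since $x_l(0)\le\beta$ for all $l$, the $\psi_R$-part of Lemma~\ref{l-psi} (applied with $\theta=0$, $\tau^*=\tau_j$, $z=\beta-\gamma'$) yields $x_j(T)\le\psi_R(\beta,\beta-\gamma',T)=\beta-e^{-K_{max}T}\gamma'$ for each $j$, whence $\max_j x_j(T)\le\beta-e^{-K_{max}T}\gamma'$. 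Since $\min_j x_j(\cdot)$ is non-decreasing (Proposition~\ref{prop: contract of support}), $\min_j x_j(T)\ge\alpha$, and subtracting gives $\D(T)\le(\beta-\alpha)-e^{-K_{max}T}\gamma'$. The case where (a) fails is symmetric: via the $\psi_L$-part of Lemma~\ref{l-psi} one gets $\min_j x_j(T)\ge\alpha+e^{-K_{max}T}\gamma'$, while $\max_j x_j(\cdot)$ is non-increasing (apply Proposition~\ref{prop: contract of support} to $\{-x_i\}$, using Lemma~\ref{l-reverse}), so again $\D(T)\le(\beta-\alpha)-e^{-K_{max}T}\gamma'$.

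Substituting $\gamma'$ completes the one-step bound: in the PE case $\D(T)\le(1-e^{-K_{max}T}\tilde\gamma)\D(0)$ and in the ISC case $\D(T)\le(1-e^{-2K_{max}T}\tilde\gamma)\D(0)$, i.e. $\D(T)\le(1-e^{-\eta K_{max}T}\tilde\gamma)\D(0)$ with $\eta$ as in the statement. To iterate, I would apply the same reasoning on each window $[nT,(n+1)T]$ to the time-shifted trajectory $t\mapsto x(t+nT)$; its weights $t\mapsto M_{ij}(t+nT)$ still satisfy \eqref{e-PE} (resp. \eqref{e-ISC}) with the same parameters $\mu,T$, and -- this is the point that needs checking -- the constants $K_{max}$ and $\tilde\gamma$ do not change along the iteration, because by Proposition~\ref{prop: contract of support} all positions stay in $[\min_i\bar x_i,\max_i\bar x_i]$ for all time, so $\phi_{ij}$ always lies between the $\phi_{\min},\phi_{\max}$ of \eqref{defof:phimin} and the bound \eqref{sum of kernels less than eta max} with the original $K_{max}$ of \eqref{defof:kmax} remains valid on every window. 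This gives $\D((n+1)T)\le(1-e^{-\eta K_{max}T}\tilde\gamma)\D(nT)$ for all $n$, and \eqref{e-stimaexp2} follows by induction.

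The argument is mostly bookkeeping on top of Lemmas~\ref{l-2alternative} and~\ref{l-psi}, so I expect the only genuinely delicate point to be the iteration: making sure that $K_{max}$ and $\tilde\gamma$ can be frozen across all the windows $[nT,(n+1)T]$, which relies on the monotonicity of the diameter keeping the arguments of $\phi$ confined to the fixed interval used to define $\phi_{\min},\phi_{\max}$. The degenerate case $\beta=\alpha$ (immediate consensus) should be disposed of at the outset.
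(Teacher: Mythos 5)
Your proof is correct and follows essentially the same route as the paper: the dichotomy of Lemma~\ref{l-2alternative}, propagation of barrier crossings to time $T$ via Lemma~\ref{l-psi}, the monotonicity of the extremes from Proposition~\ref{prop: contract of support}, and iteration over the windows $[nT,(n+1)T]$; the only cosmetic difference is that you apply the failed alternative to all indices, whereas the paper applies it only to the indices extremizing the diameter at time $T$. Your explicit justification that $K_{\max}$ and $\tilde\gamma$ can be frozen along the iteration (since Proposition~\ref{prop: contract of support} keeps the arguments of $\phi$ in the interval used to define $\phi_{\min},\phi_{\max}$, so \eqref{sum of kernels less than eta max} stays valid on every window) is a point the paper leaves implicit, and it is handled correctly.
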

\begin{proof} Let $\alpha:=\min_l x_l(0)$ and $\beta:=\max_l x_l(0)$, so that the diameter is $\D(0)=\beta-\alpha$. Choose indexes $i,j$ such that $x_i(T)=\min_{l\in\{1,\ldots,N\}}x_l(T)$ and $x_j(T)=\max_{l\in\{1,\ldots,N\}}x_l(T)$, so that the diameter is $\D(T)=x_j(T)-x_i(T)$. Define $\tilde \gamma$
and $\gamma'$ by either \eqref{e-gammaPE}, \eqref{e-gammaprimePE}, or \eqref{e-gammaISC}, \eqref{e-gammaprimeISC}.
One has two possibilities:





\begin{itemize}
\item It holds $x_i(\tau)\leq \psi_L(\alpha,\alpha + \gamma',\tau)$ for all $\tau\in [0,T]$. By Lemma \ref{l-2alternative}, it holds $x_j(\tau^*)< \psi_R(\beta,\beta-\gamma',\tau^*)$ for some $\tau^*\in[0,T]$. By Lemma \ref{l-psi}, this implies $x_j(T)<\psi_R(\beta,\beta-\gamma',T)=\beta-\exp(-K_{max}T)\gamma'$. Recalling that 
by Proposition~\ref{prop: contract of support} we have $x_i(T)\geq \alpha$,  it follows
\begin{eqnarray*}
\D(T)&<&\beta-\exp(-K_{max}T)\gamma'-\alpha\\
&= &(1-\exp(-\eta K_{max}T)\tilde \gamma)(\beta-\alpha)\\
&=&(1-\exp(-\eta K_{max}T)\tilde \gamma)\D(0).
\end{eqnarray*}
with $\eta=1$ for PE and $\eta=2$ for ISC.

\item it holds $x_i(\tau^*)> \psi_L(\alpha,\alpha + \gamma',\tau^*)$ for some $\tau^*\in [0,T]$. By Lemma \ref{l-psi}, this implies $x_i(T)> \psi_L(\alpha,\alpha + \gamma',T)=\alpha+\exp(-K_{max}T)\gamma'$. Recalling that 
by Proposition~\ref{prop: contract of support} we have $x_j(T)\leq \beta$, it follows 
\begin{eqnarray*}
\D(T)&<&\beta-\exp(-K_{max}T)\gamma'-\alpha\\
&\leq& (1-\exp(-\eta K_{max}T)\tilde \gamma)\D(0).
\end{eqnarray*}
\end{itemize}

In both cases, this proves 
\begin{eqnarray}\label{e-stimaexp-2}
\D(T)<(1-\exp(-\eta K_{max}T)\tilde \gamma)\D(0).
\end{eqnarray}
 By induction, we find \eqref{e-stimaexp2}.
 \end{proof}
 
 The proof of Theorem \ref{t-consensus} in $\R$ is now straightforward, by choosing $\gamma=\exp(-\eta K_{max}T)\tilde \gamma$.

\subsection{Proof of Theorem \ref{t-consensus} in $\R^d$}

In this section, we prove Theorem \ref{t-consensus} on $\R^d$ for any $d>1$. The idea is to show that the dynamics can be projected on a line, and to use the result of Section \ref{s-proof1} on such line.

The key observation is the following. Fix two vectors $x_0,w\in \R^d$ with $|w|=1$ and take a solution $\{x_i(t)\}$ of \eqref{e-ODE} in $\R^d$. Define the projected solution as 
$$y_i(t):=(x_i(t)-x_0)\cdot w.$$

In general, it is clear that $y_i(t)$ is not a solution of \eqref{e-ODE}, since it holds $$\phi_{ij}(t)=\phi(|x_i(t) - x_j(t)|)\neq \phi(|y_i(t) - y_j(t)|).$$

Yet, a careful look to the proof in Section \ref{s-proof1} shows that the key properties ensuring the result are Proposition~\ref{prop: contract of support} and the estimates for the interaction kernels encoded in \eqref{sum of kernels less than eta max}. In higher dimension, the fact that Proposition~\ref{prop: contract of support} holds also implies that $\phi_{\min},\phi_{\max}>0$ computed for the $x_i$ variables are suitable as bounds for the interaction of the $y_i$ too. As a consequence, Theorem \ref{t-consensus} also holds for the variables $y_i$, for any fixed $x_0,w\in \R^d$, i.e. 
\begin{eqnarray}\label{e-maxY}
&&\max_{ij}|y_i(T)-y_j(T)|\leq(1-\gamma)\max_{ij}|y_i(0)-y_j(0)|.
\end{eqnarray}

Choose now indexes $I,J$ realizing $|x_I(T)-x_J(T)|=\max_{ij}|x_i(T)-x_j(T)|$. If the maximizer is zero, then one already has consensus, since $x_i(T)=x_j(T)$ for all $i,j\in\{1,\ldots,N\}$. Otherwise, choose $x_0=x_{J}(T)$ and $w=\frac{x_{I}(T)-x_{J}(T)}{|x_{I}(T)-x_{J}(T)|}$. The corresponding $y_i$ are given by
$$y_i(t)=(x_i(t)-x_J(T))\cdot \frac{x_{I}(T)-x_{J}(T)}{|x_{I}(T)-x_{J}(T)|},$$
then \eqref{e-maxY}  reads as
\begin{eqnarray*}
&&\max_{ij} ((x_i(T)-x_j(T))\cdot (x_I(T)-x_J(T)))\leq\\
&&(1-\gamma)\cdot \max_{ij}((x_i(0)-x_j(0))\cdot (x_I(T)-x_J(T))).
\end{eqnarray*}
In the left hand side, it is easy to observe that the maximizer is given by $i=I,j=J$, by construction. In the right hand side, it is sufficient to observe that the scalar product is smaller than the product of norms. We then have
\begin{eqnarray*}
&&((x_I(T)-x_J(T))\cdot (x_I(T)-x_J(T)))=\\
&&|x_I(T)-x_J(T)|^2\leq (1-\gamma)\cdot\\
&&\max_{ij}|x_i(0)-x_j(0)|\cdot |x_I(T)-x_J(T)|.
\end{eqnarray*}
Since we assume $x_I(T)\neq x_J(T)$, it holds 
\begin{eqnarray}
&&\D(T)=|x_I(T)-x_J(T)|\leq (1-\gamma)\max_{ij}|x_i(0)-x_j(0)|\nonumber\\
&&=(1-\gamma)\D(0).\label{e-expRd}
\end{eqnarray}
Remark that this formula does not depend on the choice of $x_0,w$. By induction, it holds \eqref{e-stimaexp}.

\subsection{Proof of Theorem \ref{t-flocking}}

In this section, we prove Theorem \ref{t-flocking}. Also in this case, we prove it by providing an explicit rate of convergence.

We first prove a result about flocking: it is equivalent to boundedness of the $x_i$ variables. This was already proved in the case of solutions of \eqref{e-ODE-2nd-noM}, i.e. with constant communication. See \cite{ha-liu}.

\begin{proposition} \label{p-DXbound}
Let $(x(t),v(t))$  be a solution of \eqref{e-ODE-2nd}. Assume that conditions (H1)-(H2)-(H3) of Theorem \ref{t-consensus} hold. Also assume that either PE for all $M_{ij}$ or ISC holds. Define 
\begin{eqnarray}
&&\D_X(t):=\max_{ij}|x_i(t)-x_j(t)|,\label{e-DX}\\
&&\D_V(t):=\max_{ij}|v_i(t)-v_j(t)|.\label{e-DV}
\end{eqnarray}
Then, flocking (defined in Theorem \ref{t-flocking}) is equivalent to the following condition:
\begin{eqnarray}
\label{e-cond1}
&&\lim_{t\to+\infty}\D_V(t)= 0,{~and~}\\
&&
 \D_X(t) \mbox{~bounded for $t\in[0,+\infty)$.}\label{e-DXbounded}
\end{eqnarray}
Moreover, flocking is equivalent to the single condition \eqref{e-DXbounded}.
\end{proposition}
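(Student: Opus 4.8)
The plan is to prove the chain of implications: flocking $\Rightarrow$ \eqref{e-cond1}+\eqref{e-DXbounded} $\Rightarrow$ \eqref{e-DXbounded} $\Rightarrow$ flocking, so that everything is equivalent to the single condition \eqref{e-DXbounded}. The first implication is almost definitional: flocking means $v_i(t)\to v^*$ for all $i$, hence $v_i(t)-v_j(t)\to 0$, so $\D_V(t)\to 0$; and flocking explicitly asserts boundedness of the $x_i$, which gives \eqref{e-DXbounded}. The implication \eqref{e-cond1}+\eqref{e-DXbounded} $\Rightarrow$ \eqref{e-DXbounded} is trivial. The only substantive implication is \eqref{e-DXbounded} $\Rightarrow$ flocking, i.e. boundedness of the positions alone forces velocity alignment.

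For that implication, first I would observe that the velocity equations in \eqref{e-ODE-2nd} have exactly the structure of the first-order system \eqref{e-ODE}, with the $v_i$ playing the role of the $x_i$ and with interaction weights $M_{ij}(t)\phi_{ij}(t)$. If $\D_X(t)\le R$ for all $t$, then by \textbf{(H3)} and continuity of $\phi$ we have $\phi_{ij}(t)=\phi(|x_i(t)-x_j(t)|)\ge \phi_R:=\min_{r\in[0,R]}\phi(r)>0$ for all $t$ and all $i,j$; likewise $\phi_{ij}(t)\le \phi^R:=\max_{r\in[0,R]}\phi(r)$. Therefore the velocity dynamics is a first-order cooperative system whose interaction kernel is uniformly bounded above and below on $[0,+\infty)$, so the constants $K_{\min},K_{\max}$ of \eqref{defof:kmin}--\eqref{defof:kmax} (computed with $\phi_R,\phi^R$ in place of $\phi_{\min},\phi_{\max}$, for $\lambda_i=1$) are valid uniformly in time. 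Then Proposition~\ref{p-1st} / Theorem~\ref{t-consensus} applies verbatim to the $v_i$: under PE for all $M_{ij}$ (or ISC) one gets $\D_V(nT)\le(1-\gamma)^n\D_V(0)$ with $\gamma=\exp(-\eta K_{\max}T)\tilde\gamma>0$, hence $\D_V(t)\to 0$ exponentially, and by Proposition~\ref{prop: contract of support} applied to the $v$-variables the center quantity $\gamma_{\max}$ for velocities is non-increasing, so the $v_i$ stay bounded; combining, $v_i(t)$ is Cauchy and converges to a common limit $v^*$. Together with the assumed \eqref{e-DXbounded} this is exactly flocking.

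The one genuine subtlety — and what I expect to be the main obstacle — is the circularity issue: Theorem~\ref{t-consensus} as stated presupposes a \emph{global} bound so that $\phi_{\min},\phi_{\max}$ are well-defined, but here the analogous bound $R$ on $\D_X$ is precisely our hypothesis \eqref{e-DXbounded}, so one must check that the argument of Section~\ref{s-proof1} and Section~\ref{s-R} (in particular Lemma~\ref{l-psi}, Lemma~\ref{l-2alternative}, and the projection argument onto a line) goes through with the time-uniform bounds $\phi_R\le\phi_{ij}(t)\le\phi^R$ rather than state-dependent ones. This is exactly the same observation already used in the proof of Theorem~\ref{t-consensus} in $\R^d$, where it is noted that the only structural inputs are Proposition~\ref{prop: contract of support} and the kernel estimate \eqref{sum of kernels less than eta max}; here Proposition~\ref{prop: contract of support} for the $v$-variables gives that $\D_V$ is non-increasing (so no new bound is needed to start the iteration) and \eqref{sum of kernels less than eta max} holds with the time-uniform constants. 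I would state this explicitly as the crux of the proof. A secondary point to handle carefully is that Theorem~\ref{t-flocking} fixes $\lambda_i=1$, so only the ``fixed scaling'' branch of \eqref{defof:lambda_i}--\eqref{defof:kmax} is relevant, which keeps the constants clean.
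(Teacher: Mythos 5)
Your proposal is correct and follows essentially the same route as the paper: boundedness of $\D_X$ gives a uniform positive lower bound on $\phi_{ij}$, so Theorem \ref{t-consensus} (with the time-uniform kernel bounds, exactly the structural point you flag) applies to the $v_i$-dynamics and yields $\D_V\to 0$, with the remaining directions of the equivalence being immediate. One small touch-up: "bounded $v_i$ plus $\D_V(t)\to 0$, hence Cauchy" is not valid as stated (the whole group could drift within the bounded region); close it either as the paper does, via a convergent subsequence plus contractivity of $\max_i|v_i(t)-c|$ for a fixed center $c$, or by noting $|\dot v_i(t)|\leq K_{\max}\D_V(t)$, which is integrable in time thanks to the exponential decay.
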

\begin{proof}
We first prove the first equivalence. It is clear that flocking implies \eqref{e-cond1}-\eqref{e-DXbounded}. On the other side, observe that the dynamics of $v_i$ in \eqref{e-ODE-2nd} is of cooperative form, hence the $v_i$ are bounded due to contractivity of the support (Proposition \ref{prop: contract of support}). Then, there exists a subsequence of times $t_k\to+\infty$ such that each $v_i(t_k)$ converges to some $v_i^*$. Condition $\lim_{t\to+\infty}\D_V(t)= 0$ ensures that these $v_i^*$ are all identical. Again by contractivity of the support
and because of~\eqref{e-cond1}, it holds that for all $i \in \{1,\ldots,N\}$ the functions  $v_i(t)$ converge to a common value.

For the second equivalence, 
observe that \eqref{e-DXbounded} implies 
$$\phi(|x_i(t)-x_j(t)|)\geq\min_{r\in[0,\sup(\D_X(t))]}\phi(r)>0$$ for all $t\in[0,+\infty)$,
and for all $i,j$. This implies that the dynamics of $v_i$ in \eqref{e-ODE-2nd} is of cooperative form with a lower bound on the strength of the interaction. One can then apply Theorem \ref{t-consensus} to the $v_i$ variables and prove $\lim_{t\to+\infty}\D_V(t)= 0$.
\end{proof}

We now prove a first estimate about the rate of convergence of the diameter $\D_V$ of the velocity variables.

\begin{proposition} \label{p-2nd-stima} Let $(x(t),v(t))$  be a solution of \eqref{e-ODE-2nd} and $\D_X,\D_V$ defined by \eqref{e-DX}-\eqref{e-DV}. Assume conditions (H1)-(H2)-(H3) of Theorem \ref{t-consensus} hold. Also assume that either PE for all $M_{ij}$ or ISC holds. Finally, assume that $\phi$ is decreasing. 

Define $p:=\phi(0)=\phi_{max}$ and the function $$f(y):=\frac{\exp(-\theta_1 pT) \mu y}{\theta_2+2\mu y}$$ with the following choice of parameters $\theta_1,\theta_2$:
\begin{itemize}
\item for the PE condition with fixed scaling
$$\theta_1=1,\qquad \theta_2=N+NTp;$$
\item for the PE condition with normalized scaling
$$\theta_1=1/y,\qquad \theta_2=Np+NTp^2/y;$$

\item for the ISC condition with fixed scaling
$$\theta_1=2,\qquad \theta_2=2N\exp(pT)(1+Tp);$$

\item for the ISC condition with normalized scaling
$$\theta_1=2/y,\qquad \theta_2=2N\exp(pT/y)(1+Tp/y);$$

\end{itemize}

It then holds
\begin{eqnarray}\label{e-DVnT}
&&\D_V(nT)\leq\\
&& \D_V(0) -\frac{1}{T}\int_{\D_X(0)+T\D_V(0)}^{\D_X(nT)+T\D_V(nT)} f(\phi(x))\,dx.\nonumber
\end{eqnarray}
\end{proposition}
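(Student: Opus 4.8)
The plan is to transfer the first-order contraction estimate to the velocity variables, window by window, and then to telescope. In \eqref{e-ODE-2nd} the velocities $v_i$ obey a cooperative system of the form \eqref{e-ODE}, except that the coefficients $M_{ij}(t)\phi_{ij}(t)$ involve $\phi_{ij}(t)=\phi(|x_i(t)-x_j(t)|)$, which depends on the position variables rather than on the $v_i$ themselves. As in the $\R^d$ reduction of Theorem \ref{t-consensus} and in the proof of Proposition \ref{p-DXbound}, this is harmless: what the arguments of Sections~\ref{s-R} and \ref{s-proof1} use about the interaction is only cooperativity (non-increase of $\D_V$, by Proposition \ref{prop: contract of support}) together with the kernel bounds \eqref{sum of kernels less than eta max}, so the one-step contraction \eqref{e-stimaexp-2} applies to the $v_i$ on any interval over which suitable constants $K_{\min},K_{\max}$ are available.

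First I record two facts. By Proposition \ref{prop: contract of support} for the $v_i$, $\D_V$ is non-increasing; and since $\dot x_i=v_i$, $\D_X$ is Lipschitz with $\frac{d}{dt}\D_X(t)\le\D_V(t)$ for a.e.\ $t$ (Danskin), whence $\D_X(t)\le\D_X(kT)+(t-kT)\D_V(kT)$ for $t\in[kT,(k+1)T]$. Set $a_k:=\D_X(kT)+T\D_V(kT)$ and $y_k:=\phi(a_k)$. Since $\phi$ is decreasing, $0<y_k\le\phi_{ij}(t)\le\phi(0)=p$ for all $i,j$ and all $t\in[kT,(k+1)T]$, so on that window the $v_i$-subsystem satisfies \eqref{sum of kernels less than eta max} with $K_{\min},K_{\max}$ computed from $\phi_{\min}=y_k$, $\phi_{\max}=p$ (that is, $K_{\max}=p$, $K_{\min}=y_k$ for the fixed scaling, and $K_{\max}=p/y_k$, $K_{\min}=y_k/p$ for the normalized one), with unchanged PE/ISC parameters $\mu,T$ (those conditions being invariant under time translation).

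The core step is then one application of \eqref{e-stimaexp-2} to the $v_i$ on $[kT,(k+1)T]$, which yields $\D_V((k+1)T)\le(1-g(y_k))\D_V(kT)$, where $g(y_k)=\exp(-\eta K_{\max}T)\tilde\gamma>0$ is the contraction rate of \eqref{e-stimaexp-2} evaluated at those bounds ($\eta=1$ under PE, $\eta=2$ under ISC). One then checks, for each of the four combinations of condition and scaling, that $g(y)\ge f(y)$ for all $y>0$ with $f$ and $\theta_1,\theta_2$ as in the statement: in the PE cases $g=f$ once $\tilde\gamma$ is expanded, and in the ISC cases $g$ is bounded below by $f$ after a coarsening of the constants appearing in $\tilde\gamma$ that brings it into the displayed form. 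Hence $\D_V((k+1)T)\le(1-f(y_k))\D_V(kT)$ for every $k$. This bookkeeping — and, in the normalized case, tracking how the $y$-dependence of $\theta_1,\theta_2$ arises from the $y$-dependence of $K_{\max}$, hence of $e^{-K_{\max}T}$ — is the most laborious and the only genuinely delicate part of the proof.

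It remains to telescope. From $\D_V((k+1)T)\le(1-f(y_k))\D_V(kT)$ we get $\D_V(kT)-\D_V((k+1)T)\ge f(y_k)\D_V(kT)$. Moreover $\D_X((k+1)T)-\D_X(kT)\le\int_{kT}^{(k+1)T}\D_V(s)\,ds\le T\D_V(kT)$ and $\D_V((k+1)T)\le\D_V(kT)$ give $a_{k+1}-a_k\le T\D_V(kT)$; and $x\mapsto f(\phi(x))$ is non-increasing, because $\phi$ is decreasing and $f$ is increasing in $y$ in each of the four cases (a direct computation, including the normalized one where $\theta_1,\theta_2$ themselves vary with $y$). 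Hence, when $a_{k+1}\ge a_k$, $\int_{a_k}^{a_{k+1}}f(\phi(x))\,dx\le f(\phi(a_k))\,(a_{k+1}-a_k)=f(y_k)(a_{k+1}-a_k)\le Tf(y_k)\D_V(kT)$, whereas if $a_{k+1}<a_k$ the integral is $\le 0\le Tf(y_k)\D_V(kT)$; in both cases $\D_V(kT)-\D_V((k+1)T)\ge\frac1T\int_{a_k}^{a_{k+1}}f(\phi(x))\,dx$. Summing over $k=0,\dots,n-1$, the left-hand side telescopes to $\D_V(0)-\D_V(nT)$ and the right-hand side, by the fundamental theorem of calculus (no monotonicity of $k\mapsto a_k$ is needed for this), to $\frac1T\int_{a_0}^{a_n}f(\phi(x))\,dx$; since $a_0=\D_X(0)+T\D_V(0)$ and $a_n=\D_X(nT)+T\D_V(nT)$, this is exactly \eqref{e-DVnT}. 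The a.e.\ differentiability of $\D_X$ and $\D_V$ used above is justified just as in the proof of Proposition \ref{prop: contract of support}.
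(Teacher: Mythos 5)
Your proof is correct and follows essentially the same route as the paper: bound $\phi_{\min}$ on each window $[kT,(k+1)T]$ by $\phi(\D_X(kT)+T\D_V(kT))$, apply the first-order contraction of Proposition \ref{p-1st} to the velocity subsystem (using only cooperativity and the kernel bounds), identify the resulting rate $\exp(-\eta K_{\max}T)\tilde\gamma$ with $f$, and pass to the integral form via monotonicity of $f\circ\phi$; your explicit window-by-window telescoping is just the paper's ``prove $n=1$ and conclude by induction'' step made precise. The only noteworthy difference is that the paper asserts the rate \emph{coincides} with $f$ in all four cases, while you (more accurately, in the ISC cases) present $f$ as a coarsened lower bound of the rate.
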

\begin{proof} We first prove \eqref{e-DVnT} for $n=1$. First recall that $\D_V(t)$ is a decreasing function. Then, by integrating \eqref{e-ODE-2nd}, it holds 
$$\D_X(t)\in [\D_X(0)-t\D_V(0),D_X(0)+t\D_V(0)].$$

Recall the definition of $K_{min},K_{max}$ as functions of $\phi_{min}$ given in \eqref{defof:kmin}-\eqref{defof:kmax}. They are increasing as a function of $\phi_{min}$. Also recall the definition of $\tilde\gamma$ given in Lemma \ref{l-2alternative}, as a function of $K_{min},K_{max}$, hence of $\phi_{min}$ only (since $\phi_{max}=\phi(0)$ is fixed). Since $\tilde\gamma$ is increasing with respect to $K_{min}$ and decreasing with respect to $K_{max}$, it is decreasing as a function of $\phi_{min}$, (again observing that $\phi_{\max}$ is fixed). Finally, recall the definition of $\exp(-\eta K_{max}T)\tilde \gamma$ as a function of $\phi_{min}$: by composition of the definitions above, one can check that it coincides with $f(\phi_{min})$, where $f$ is defined in the statement in the 4 different cases. Moreover, it is increasing as a function of $\phi_{min}$. Recall that $\phi$ is decreasing, then we have $\phi_{min}\geq \phi\big(\D_X(0)+T\D_V(0)\big)$.
Moreover it holds $|\D_X(T)-\D_X(0)|\leq T\D_V(0)$. Thus, one can apply Proposition \ref{p-1st}, that ensures
\begin{eqnarray}\label{e-DVT}
&&\D_V(T)\leq (1-f(\phi_{min}))\D_V(0),
\end{eqnarray}
and then we derive
\begin{eqnarray*}
&&\D_V(T)-\D_V(0)\leq \\
&&-f(\phi(\D_X(0)+T\D_V(0))) \frac{|\D_X(T)-\D_X(0)|}T.
\end{eqnarray*}
We rewrite it in integral form, by recalling that $f(\phi(r))$ is decreasing
(since $f$ is increasing), strictly positive and $\D_V(T)\leq \D_V(0)$.
\begin{eqnarray*}
&&\D_V(T)-\D_V(0)\leq\\
&& -\frac1T \int_{\D_X(0)+T\D_V(0)}^{\D_X(T)+T\D_V(0)}
f(\phi(\D_X(0)+T\D_V(0)))\,dr\\
&&\leq -\frac1T \int_{\D_X(0)+T\D_V(0)}^{\D_X(T)+T\D_V(T)}
f(\phi(r))\,dr.
\end{eqnarray*}
In the last inequality we are assuming that $\D_X(0)+T\D_V(0)<\D_X(T)+T\D_V(T)$ since, otherwise, the estimate~\eqref{e-DVnT} would be already true for $n=1$.
Thus, the statement is proved for $n=1$. The general estimate \eqref{e-DVnT} is then recovered by induction.
\end{proof}

We can now provide a quantitative result of flocking. As a particular case, this later implies Theorem \ref{t-flocking}.
\begin{proposition} \label{p-2nd} Let $(x(t),v(t))$  be a solution of \eqref{e-ODE-2nd}. Under the same hypotheses and using the same notation of Proposition \ref{p-2nd-stima}, assume that it holds
\begin{equation}\label{e-cond-flock}
\D_V(0)< \frac{1}{T}\int_{\D_X(0)+T\D_V(0)}^{+\infty} f(\phi(x))\,dx.
\end{equation}
Then flocking occurs.
\end{proposition}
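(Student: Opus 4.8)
The plan is to deduce flocking from the second equivalence in Proposition~\ref{p-DXbound}: it suffices to prove that $\D_X$ is bounded on $[0,+\infty)$, and for this I will feed the recursive bound~\eqref{e-DVnT} of Proposition~\ref{p-2nd-stima} into a discrete bootstrap. Write $b_n:=\D_V(nT)$ and $X_n:=\D_X(nT)+T\D_V(nT)$, so that $X_0=\D_X(0)+T\D_V(0)$ is the lower limit in~\eqref{e-cond-flock} and $X_n$ is exactly the upper endpoint of the integral in~\eqref{e-DVnT}. Two elementary facts will be used throughout: first, $\D_V$ is non-increasing, because the velocities solve a first-order cooperative dynamics and Proposition~\ref{prop: contract of support} applies, so $0\le b_{n+1}\le b_n$; second, integrating $\dot x_i=v_i$ gives $\D_X(t)\le\D_X(s)+\int_s^t\D_V(\tau)\,d\tau$ for $t\ge s\ge 0$, whence $\D_X(t)\le X_n$ for all $t\in[nT,(n+1)T]$, and $X_{n+1}-X_n=(\D_X((n+1)T)-\D_X(nT))+T(b_{n+1}-b_n)\le Tb_n$.

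Next I would introduce the primitive $F(y):=\int_{X_0}^y f(\phi(x))\,dx$. By~(H3) one has $\phi>0$, and inspection of the four cases defining $f$ shows $f>0$ on $(0,+\infty)$; hence $x\mapsto f(\phi(x))$ is strictly positive, $F$ is continuous and strictly increasing with $F(X_0)=0$, and $F(+\infty)\in(0,+\infty]$. The flocking hypothesis~\eqref{e-cond-flock} is then precisely $Tb_0<F(+\infty)$. On the other hand,~\eqref{e-DVnT} reads $b_n\le b_0-\tfrac1T F(X_n)$ for every $n$; since $b_n=\D_V(nT)\ge 0$, this gives $F(X_n)\le Tb_0$ for all $n\ge 0$ — an inequality valid without case distinction, since~\eqref{e-DVnT} holds trivially when $X_n\le X_0$.

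From $F(X_n)\le Tb_0<F(+\infty)$, continuity and strict monotonicity of $F$ force $X_n$ into a bounded set: if $X_n\le X_0$ this is immediate, and if $X_n>X_0$ then $X_n\le X^\ast:=\sup\{y:\ F(y)\le Tb_0\}$, which is finite exactly because $Tb_0<F(+\infty)$. Thus $X_n\le\bar X:=\max\{X_0,X^\ast\}<+\infty$ for every $n$, and by the interpolation bound above $\D_X(t)\le X_n\le\bar X$ for all $t\in[nT,(n+1)T]$, i.e. $\D_X$ is bounded on $[0,+\infty)$. Proposition~\ref{p-DXbound} then yields flocking; in particular, in the fixed-scaling PE case with $\phi$ decreasing and non-integrable one has $f(\phi(x))\ge c\,\phi(x)$ for a constant $c>0$, so the right-hand side of~\eqref{e-cond-flock} equals $+\infty$ and Theorem~\ref{t-flocking} follows.

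Since Proposition~\ref{p-2nd-stima} already carries the analytic weight, I do not anticipate a genuine obstacle: the argument is a bootstrap whose only delicate points are bookkeeping — tracking the combined quantity $X_n=\D_X(nT)+T\D_V(nT)$ rather than $\D_X(nT)$ alone (because that is the endpoint~\eqref{e-DVnT} produces), using the monotonicity of $\D_V$ both to pass from the grid times to all $t$ and to control $X_{n+1}-X_n$, and invoking $f\circ\phi>0$ so that $F$ is invertible and $F(X_n)\le Tb_0<F(+\infty)$ delivers a uniform bound on $X_n$.
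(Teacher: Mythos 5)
Your proposal is correct and follows essentially the same route as the paper: reduce flocking to boundedness of $\D_X$ via Proposition~\ref{p-DXbound}, then combine the estimate \eqref{e-DVnT} with $\D_V\geq 0$ and hypothesis \eqref{e-cond-flock} to bound $\D_X(nT)+T\D_V(nT)$. The only difference is presentational: the paper argues by contradiction along a subsequence where this quantity increases without bound, while you argue directly through the primitive $F$, which additionally yields an explicit uniform bound $\bar X$ on the diameter.
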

\begin{proof} By using Proposition \ref{p-DXbound}, it is sufficient to prove that $\D_X(t)$ is bounded. By contradiction, assume that $\D_X(t)$ is unbounded. By recalling that $\D_X(t)$ is a Lipschitz function of time, this implies that $\D_X(nT)$ is an unbounded sequence too. Then, there exists a subsequence $n_k\to+\infty$ such that $\D_X(n_kT)$ is increasing. Since $\D_V(n_kT)$ is a bounded sequence, there exists a further subsequence (that we do not relabel) for which the sequence $\D_X(n_kT)+T\D_V(n_kT)$ is increasing.

By recalling that $f(\phi(x))$ is a positive function, the following sequence is decreasing:
$$I_k:=\D_V(0)- \frac{1}{T}\int_{\D_X(0)+T\D_V(0)}^{\D_X(n_kT)+T\D_V(n_kT)} f(\phi(x))\,dx.$$

By hypothesis \eqref{e-cond-flock}, the limit satisfies $\lim_{k\to+\infty}I_k<0$. Then, there exists $k\in\mathbb{N}$ such that  $I_K<0$. By \eqref{e-DVnT}, this implies that $\D_V(n_kT)<0$. This contradicts the fact that $\D_V(t)$ is positive, due to the definition in \eqref{e-DV}.
\end{proof}

\begin{corollary} Let $(x(t),v(t))$  be a solution of \eqref{e-ODE-2nd} and $\D_X,\D_V$ defined by \eqref{e-DX}-\eqref{e-DV}. Assume conditions (H1)-(H2)-(H3) of Theorem \ref{t-consensus} hold. Also assume that either PE for all $M_{ij}$ or ISC holds. Finally, assume that $\phi$ is decreasing.  

Then, flocking occurs if one of the following non-integrability conditions holds:
\begin{itemize}
\item for fixed scaling, either in the case of PE or ISC condition, assume that  $$\int_0^{+\infty}\phi(r)\,dr=+\infty,$$ i.e. $\phi$ is not integrable;
\item for normalized scaling,
either in the case of PE or ISC condition, assume that 
$$\int_0^{+\infty} \exp(-\phi(0)T/\phi(r))\phi(r)^2\,dr=+\infty.$$
\end{itemize}

The first case is Theorem \ref{t-flocking}.
\end{corollary}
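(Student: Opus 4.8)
The plan is to read the corollary off of Proposition \ref{p-2nd}. Fix a solution $(x(t),v(t))$ and write $R:=\D_X(0)+T\D_V(0)$ and $p:=\phi(0)=\phi_{\max}$. Since $\D_V(0)$ is a fixed finite number, the flocking condition \eqref{e-cond-flock} holds as soon as $\int_R^{+\infty} f(\phi(x))\,dx=+\infty$, and this is in turn implied by $\int_0^{+\infty} f(\phi(x))\,dx=+\infty$: indeed $x\mapsto f(\phi(x))$ is continuous (by (H1) $\phi$ is Lipschitz and by (H3) $\phi>0$, so $f\circ\phi$ is a composition of continuous functions on $(0,p]$) and hence integrable on the compact interval $[0,R]$, so divergence of the integral over $[0,+\infty)$ forces divergence over $[R,+\infty)$. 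Thus it suffices to prove $\int_0^{+\infty} f(\phi(x))\,dx=+\infty$ in each of the four cases of Proposition \ref{p-2nd-stima}, and then apply Proposition \ref{p-2nd}.

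The next step is to reduce this divergence to a pointwise comparison. Using the explicit form $f(y)=\dfrac{\exp(-\theta_1 pT)\,\mu y}{\theta_2+2\mu y}$, I would show that on $(0,p]$ — which contains the range of $\phi$, since $\phi$ is decreasing with $\phi(0)=p$ — one has
$$
f(y)\ \geq\ c\,g(y),\qquad
g(y):=\begin{cases} y & \text{for fixed scaling},\\[1mm] y^{2}\exp\!\big(-pT/y\big) & \text{for normalized scaling},\end{cases}
$$
for some constant $c>0$ depending only on $N,T,\mu,p$. In the fixed-scaling cases $\theta_1,\theta_2$ are constants, so $f(y)\geq\frac{\exp(-\theta_1 pT)\mu}{\theta_2+2\mu p}\,y$ at once. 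In the normalized cases $\theta_1,\theta_2$ carry factors $1/y$ that blow up as $y\to 0^{+}$; multiplying numerator and denominator of $f$ by $y$ turns the denominator into a function that is controlled on $(0,p]$ (a polynomial in $y$ in the PE case; a bounded multiple of the surviving $\exp(\cdot)$ factor in the ISC case), and this yields the stated bound. The one point to check carefully is that the exponential rate produced equals $p=\phi(0)$ and not a larger multiple of it; this is immediate in the PE case, and in the ISC case it is verified by re‑deriving $f(\phi_{\min})=\exp(-\eta K_{\max}T)\tilde\gamma$ from Proposition \ref{p-1st} and Lemma \ref{l-2alternative} with $K_{\max}=\phi(0)/\phi_{\min}$.

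Granting the pointwise bound, the conclusion follows immediately: $\int_0^{+\infty} f(\phi(x))\,dx\geq c\int_0^{+\infty} g(\phi(x))\,dx$, and the right-hand integral is (up to the constant $c$) exactly the stated non-integrability hypothesis — $\int_0^{+\infty}\phi(r)\,dr=+\infty$ for fixed scaling, and $\int_0^{+\infty}\exp(-\phi(0)T/\phi(r))\phi(r)^2\,dr=+\infty$ for normalized scaling. Hence $\int_0^{+\infty} f(\phi(x))\,dx=+\infty$, condition \eqref{e-cond-flock} holds, and Proposition \ref{p-2nd} gives flocking. The final assertion ("the first case is Theorem \ref{t-flocking}") then follows by noting that $\lambda_i=1$ is precisely the fixed-scaling choice \eqref{defof:lambda_i}, and that a decreasing $\phi$ with $\int_0^{+\infty}\phi=+\infty$ under PE is exactly the hypothesis of Theorem \ref{t-flocking}.

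I expect the main obstacle to be the normalized-scaling pointwise estimate on $f$: one must control the competition between an exponentially small numerator $\exp(-\theta_1 pT)$ and a denominator that degenerates like $1/y$ (PE) or even like $\exp(pT/y)$ (ISC) as $y\to 0^{+}$, and extract a lower bound whose exponential decay rate is no worse than $\exp(-\phi(0)T/\phi(r))$. The algebra itself is routine once one clears denominators in the right order, but this is where all the case distinctions genuinely live and the only place where the precise values of $\theta_1,\theta_2$ — hence the constants in Lemma \ref{l-2alternative} — actually enter the argument; if the rate produced in the ISC normalized case turns out to exceed $\phi(0)T/\phi(r)$, the hypothesis there would need to be strengthened to the corresponding weight.
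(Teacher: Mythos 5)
Your overall route is the paper's route: reduce \eqref{e-cond-flock} to divergence of $\int f(\phi(x))\,dx$ via Proposition \ref{p-2nd}, and then compare $f(y)$ for small $y$ with $y$ (fixed scaling) or $y^2e^{-pT/y}$ (normalized scaling). The paper does the comparison through an expansion of $f$ at $y=0^+$ together with the trivial case $\lim_{x\to+\infty}\phi(x)>0$, while you use a uniform lower bound $f(y)\geq c\,g(y)$ on $(0,p]$; this repackaging is harmless, your reduction from $\int_{\D_X(0)+T\D_V(0)}^{+\infty}$ to $\int_0^{+\infty}$ is fine, and the two fixed-scaling cases and the normalized PE case work exactly as you describe (e.g. for normalized PE, $f(y)=\frac{e^{-pT/y}\mu y^2}{Npy+NTp^2+2\mu y^2}\geq c\,y^2e^{-pT/y}$ on $(0,p]$).

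The gap is the normalized ISC case, and it is precisely the point you flagged at the end but then claimed to have ``verified''. There $\eta=2$ and $K_{\max}=p/\phi_{\min}$, so the consensus rate of Proposition \ref{p-1st} is $e^{-2pT/y}\tilde\gamma(y)$ with $\tilde\gamma(y)\asymp y^2$ as $y\to0^+$; hence $f(y)\asymp y^2e^{-2pT/y}$ (and with the printed $\theta_2=2Ne^{pT/y}(1+Tp/y)$ one even gets $f(y)\asymp y^2e^{-3pT/y}$). In either case $f(y)/(y^2e^{-pT/y})\to0$ as $y\to0^+$, so no constant $c>0$ gives $f(y)\geq c\,y^2e^{-pT/y}$, and the failure is not only of your particular bound but of the implication itself: taking $\phi(r)\sim 2pT/\log r$ for large $r$ gives $\int e^{-pT/\phi(r)}\phi(r)^2\,dr=+\infty$ while $\int e^{-2pT/\phi(r)}\phi(r)^2\,dr<+\infty$, so the stated hypothesis with weight $e^{-\phi(0)T/\phi(r)}$ does not force $\int f(\phi(x))\,dx=+\infty$ in the ISC--normalized case. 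To be fair, the paper's own proof is loose at exactly this spot: it asserts the expansion $f(y)=Ce^{-2pT/y}y^2+o(\cdot)$ for \emph{both} normalized cases (which matches neither PE, where the rate is $pT/y$, nor ISC with the printed $\theta_2$, where it is $3pT/y$) and then invokes the condition with weight $e^{-\phi(0)T/\phi(r)}$. So your closing caveat is the correct conclusion: for ISC with normalized scaling the hypothesis would have to be strengthened to the weight actually produced by $\eta=2$ (i.e. $e^{-2\phi(0)T/\phi(r)}$, or $e^{-3\phi(0)T/\phi(r)}$ if one uses $\theta_2$ as printed), whereas your intermediate claim that the exponential rate equals $\phi(0)T$ in the ISC case is incorrect.
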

\begin{proof} For both
cases, our goal is to prove that, for any $\D_X(0),\D_V(0)$, condition \eqref{e-cond-flock} occurs. It is then sufficient to prove that the integral $$I:=\int_{\D_X(0)+T\D_V(0)}^{+\infty} f(\phi(x))\,dx$$ is unbounded. Observe that boundedness of this integral only depends on the behavior of the function $f(\phi(x))$ for $x\to +\infty$. By recalling that $\phi(x)$ is positive and decreasing, it exists $L=\lim_{x\to+\infty} \phi(x)$. We have two possibilities:
\begin{itemize}
\item it holds $L>0$. It then holds $\lim_{x\to+\infty} f(\phi(x))=f(L)>0$, hence $I=+\infty$.
\item it holds $L=0$. We can then approximate $f(y)$ around 
$y=0$
to study boundedness of the integral. We have two
cases:
\begin{itemize}
\item in the cases of fixed scaling (both for the PE and ISC condition), it holds $$f(y)= C y+o(y),$$ in a neighborhood of $y=0^+$. 
\item in the case of normalized scaling (both for the PE and ISC condition), it holds
$$f(y)=C\exp(-2pT/y)y^2+o(\exp(-2pT/y)y^2),$$ 
in a neighborhood of $y=0^+$.
\end{itemize}
In 
each of the above cases, 
the corresponding non-integrability condition ensures that $I=+\infty$.
\end{itemize}
\end{proof}
\begin{remark} For the case of fixed scaling, one observes that condition \eqref{e-cond-flock} is a generalization of the results for flocking under no lack of interactions. They can be recovered by considering $T=\mu$, that  ensures $M_{ij}=1$ a.e.. In this case, Proposition \ref{p-2nd} coincides with \cite[Theorem 3.1]{piccoli2015control}.\\
\end{remark}

\section{Conclusions and future directions}\label{s-conclusions}
In this article, we have proved sufficient conditions for convergence of first-order systems to consensus and of second-order systems to flocking, under communication failures. We introduced quantitative estimates about the minimum level of service, namely the PE and ISC conditions. We proved that, for each of these conditions, consensus and flocking can be achieved under the classical conditions for systems with no communication failures. Yet, the rate of convergence is slower. In both consensus and flocking problems, we provided explicit rates of convergence.

In the future, we aim to find even weaker conditions for communication failures, to understand the (theoretical) minimum level of service ensuring consensus or flocking. Moreover, we aim to find explicit rates of convergence for other similar quantities, such as the algebraic connectivity studied in \cite{bonnet2023consensus}.

\bibliographystyle{ieeetr}
\bibliography{refPE}

\end{document}